\documentclass[10pt]{article}
\usepackage[top=2.5cm, bottom=2.5cm, left=2.5cm, right=2.5cm] {geometry}

\usepackage{amsmath,amssymb,theorem,color}
\numberwithin{equation}{section} 
\usepackage{amssymb}
\usepackage{amsmath}
\usepackage{graphicx}
\usepackage{amsfonts}%
\setcounter{MaxMatrixCols}{30}
\usepackage{amsmath,amssymb,theorem}


\newcommand{\R}{\ensuremath{\mathbb{R}}}
\newcommand{\N}{\ensuremath{\mathbb{N}}}

\newcommand{\ltn}{\ensuremath{\left| \! \left| \! \left|}}
\newcommand{\rtn}{\ensuremath{\right| \! \right| \! \right|}}

\newtheorem{theorem}{Theorem}[section]
{ \theorembodyfont{\normalfont} 

\newtheorem{remark}[theorem]{Remark}
}

\newtheorem{definition}[theorem]{Definition}
\newtheorem{lemma}[theorem]{Lemma}
\newtheorem{corollary}[theorem]{Corollary}
\newtheorem{proposition}[theorem]{Proposition}


\newcounter{enumctr}

\newcommand{\tn}[1]{{\left\vert\kern-0.25ex\left\vert\kern-0.25ex\left\vert #1 
		\right\vert\kern-0.25ex\right\vert\kern-0.25ex\right\vert}}


\begin{document}	

\title{Nonautonomous Young differential equations revisited}
\author{
Nguyen Dinh Cong\thanks{Institute of Mathematics, Vietnam Academy of Science and Technology, Vietnam {\it E-mail: ndcong@math.ac.vn}}, $\;$ Luu Hoang Duc\thanks{Institute of Mathematics, Vietnam Academy of Science and Technology, \& Max-Planck-Institut f\"ur Mathematik in den Naturwissenschaften, Leipzig, Germany {\it E-mail: lhduc@math.ac.vn, duc.luu@mis.mpg.de}}, $\;$ Phan Thanh Hong \thanks{Thang Long University, Hanoi, Vietnam {\it E-mail: hongpt@thanglong.edu.vn }}}
\date{}
\maketitle

\begin{abstract}
In this paper we prove that under weak conditions a nonautonomous Young differential equation possesses a unique solution which depends continuously on initial conditions. The proofs use estimates in $p$-variation norms,  greedy time techniques, and Gronwall-type lemma with the help of Shauder theorem of fixed points. 
\end{abstract}

{\bf Keywords:}
stochastic differential equations (SDE), fractional Brownian motion (fBm), Young integral, $p$-variation.


\section{Introduction}
This paper deals with the Young differential equation of the form
\begin{equation}\label{eqn.sde-fBm1}
dx_t = f(t,x_t)dt+g(t,x_t)d\omega_t, \ t\geq 0
\end{equation}
where $f: \R\times \R^d \to \R^d$ and $g: \R \times \R^d \to \R^{d \times m}$ are continuous functions, $\omega$ is a $\R^m$-valued function of  finite $p$-variation norm for some $1<p<2$. Such type of system is generated from  stochastic differential equations driven by fractional Brownian noises, as seen e.g. in \cite{mandelbrot}. Equation \eqref{eqn.sde-fBm1} is understood in the integral form
\begin{equation}\label{integral.eq.Stieltes0}
x_t=x_{0} +\int_0^t f(s,x_s) ds + \int_0^tg(s,x_s)d\omega_s,\  t\geq 0,
\end{equation}
where the first integral is of Riemannian type, meanwhile the second integral can be defined in the Young sense \cite{young}. The existence and uniqueness of the solution of \eqref{integral.eq.Stieltes0} are studied by several authors. When  $f,g$ are time-independent, system \eqref{integral.eq.Stieltes0} is proved in \cite{young} and \cite{ruzmaikina} and \cite{lyons94} to have a unique solution in a certain space of continuous functions with bounded  $p$-variation. The result is then generalized for the case $2<p<3$ by \cite{friz} and \cite{lyonsqian} using rough path theory, see also recent work by \cite{riedelscheutzow} for rough differential equations. According to their settings, $g$ is often assumed to be infinitely differentiable and bounded in itself and its derivatives.\\
Another approach following Z\"ahle \cite{zaehle} by using fractional derivatives can be seen in \cite{nualart3} which derives very weak conditions for $f$ and $g$ in \eqref{eqn.sde-fBm1}, in particular $g$ need to be only $C^1$ with bounded and H\"older continuous first derivative, to ensure the existence and uniqueness of the solution in the space of H\"older continuous functions. \\
Our aim in this paper is to close the gap between the two methods by proving that, under similar assumptions to those of Nualart and Rascanu~\cite{nualart3}, the existence and uniqueness theorem for system \eqref{eqn.sde-fBm1} still holds in the space of continuous functions with bounded $p$-variation norm. For that to work, we construct a sequence of the so-called greedy times (see e.g. \cite{lyonsetal}) such that the solution can be proved to exists uniquely in each interval of the consecutive greedy times, and is then concatenated to form a global solution. It is remarkable that since we are using estimates for $p$-variation norms, we do not apply the classical arguments of contraction mappings, but use Shauder-Tychonoff fixed point theorem as seen in \cite{lyons94} and a Gronwall-type lemma. \\
The paper is organized as follows. In section 2, the Young integral is introduced and a greedy times analysis is given. In Section 3, we prove the existence and uniqueness of the global solution of system \eqref{integral.eq.Stieltes0} in Theorem \ref{theo2}, for this we need to formulate a Gronwall-type lemma. Proposition \ref{growth}  gives an estimate of $q$-var norm of solution via $p$-var norm of the driver $\omega$. We also prove the existence and uniqueness of the solution of the backward equation \eqref{RDEbw} in Theorem \ref{theo3}. In Section 4, the fact in Theorem \ref{flow1} that two trajectories do not intersect help to conclude that the Cauchy operator or the Ito map of \eqref{integral.eq.Stieltes0} generates a continuous two parameter flow.

\section{Preliminaries}

\subsection{Young integral}\label{subsec.frac.int}
In this section we recall some facts about Young integral, more details can be seen in \cite{friz}. Let $C([a,b],\R^d)$ denote the space of all continuous paths $x:\;[a,b] \to \R^d$ equipped with sup norm $\|\cdot\|_{\infty,[a,b]}$ given by $\|x\|_{\infty,[a,b]}=\sup_{t\in [a,b]} |x_t|$, where $|\cdot|$ is the Euclidean norm in $\R^d$. For $p\geq 1$ and $[a,b] \subset \R$, a continuous path $x:[a,b] \to \R^d$ is of finite $p$-variation if 
\begin{eqnarray}
\ltn x\rtn_{p\text{-var},[a,b]} :=\left(\sup_{\Pi(a,b)}\sum_{i=1}^n|x_{t_{i+1}}-x_{t_i}|^p\right)^{1/p} < \infty,
\end{eqnarray}
where the supremum is taken over the whole class of finite partition of $[a,b]$. The subspace $\widehat{C}^p([a,b],\R^d)\subset C([a,b],\R^d)$ of all paths  $x$ with finite $p$-variation and equipped with the $p$-var norm
\begin{eqnarray*}
	\|x\|_{p\text{-var},[a,b]}&:=& |x_a|+\ltn x\rtn_{p\text{-var},[a,b]},
\end{eqnarray*}
is a nonseparable Banach space \cite[Theorem 5.25, p.\ 92]{friz}. Notice that if $x\in \widehat{C}^p([a,b],\R^d) $ then the mapping $(s,t)\to \ltn x\rtn_{p\text{-var},[s,t]}$ is continuous on the simplex $\Delta[a,b]:=\{(s,t)|a\leq s\leq t\leq b\}$, see \cite[Proposition 5.8, p. 80]{friz}. 

Furthermore, the closure of $C^\infty([a,b],\R^d)$ in $\widehat{C}^p([a,b],\R^d)$ is a separable Banach space denoted by  $\widehat{C}^{0,p}([a,b],\R^d)$ which can be defined as the space of all continuous paths $x$ such that
\[
\lim \limits_{\delta \to 0} \sup_{\Pi(a,b), |\Pi| \leq \delta} \sum_i |x_{t_{i+1}}-h_{t_i}|^p =0.
\]
It is easy to prove (see \cite[Corollary 5.33, p. 98]{friz}) that for $1\leq p< p'$ we have
\[
\widehat{C}^p([a,b],\R^d)\subset \widehat{C}^{0,p'}([a,b],\R^d).
\]

Also, for $0<\alpha \leq 1$ denote by $C^{\alpha\text{-Hol}}([a,b],\R^d)$ the Banach space of all H\"older continuous paths $x:[a,b]\to \R^d$ with exponential $\alpha$, equipped with the norm
\begin{eqnarray}
\|x\|_{\alpha\text{-Hol},[a,b]} &:=& |x_a| + \ltn x\rtn_{\alpha\text{-Hol},[a,b]}\notag\\
&=&  |x_a| +\sup_{(s,t)\in \Delta[a,b]} \frac{|x_t-x_s|}{(t-s)^{\alpha}}< \infty.
\end{eqnarray}
Clearly, if $x\in C^{\alpha\text{-Hol}}([a,b],\R^d)$ then for all $s,t\in [a,b]$ we have
\begin{eqnarray*}
|x_t-x_s|&\leq& \ltn x\rtn_{\alpha\text{-Hol},[a,b]} |t-s|^\alpha.
\end{eqnarray*}
Hence, for all $p$ such that $p\alpha\geq 1$ we have
\begin{eqnarray}\label{pvar-Hol.norm}
\ltn x\rtn_{p\text{-var},[a,b]}&\leq& \ltn x\rtn_{\alpha\text{-Hol},[a,b]} (b-a)^\alpha<\infty.
\end{eqnarray} 
Therefore, $C^{1/p\text{-Hol}}([a,b],\R^d)\subset \widehat{C}^p([a,b],\R^d)$.

As introduced in \cite{nualart3}, the space $W_b^{1/p,\infty}([a,b],\R^d)$ of measurable
functions $g :[a,b] \rightarrow \R^d$ such that 
$$\sup_{a<s<t<b}\left(\frac{|g_t-g_s|}{(t-s)^{1/p}}+\int_s^t\frac{|g_y-g_s|}{(y-s)^{1+1/p}}dy\right)<\infty$$
is a subspace of $C^{1/p\text{-Hol}}([a,b],\R^d)$. Hence $W_b^{1/p,\infty}([a,b],\R^d)\subset \widehat{C}^p([a,b],\R^d) $.
\begin{lemma}\label{additive}
Let $x\in \widehat{C}^{p}([a,b],\R^d)$, $p\geq 1$. If $a = a_1<a_2<\cdots < a_k = b$, then 
$$
\sum_{i=1}^{k-1}\ltn x\rtn^p_{p\text{-}\rm{var},[a_i,a_{i+1}]}\leq \ltn x\rtn^p_{p\text{-}\rm{var},[a_1,a_k]}\leq (k-1)^{p-1}\sum_{i=1}^{k-1}\ltn x\rtn^p_{p\text{-}\rm{var},[a_i,a_{i+1}]}.
$$ 
\end{lemma}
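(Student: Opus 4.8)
The plan is to prove the two inequalities separately, in each case passing between a partition of the whole interval $[a_1,a_k]$ and the partitions it induces on the pieces $[a_i,a_{i+1}]$. The one elementary tool I will use throughout is the power-mean inequality $\left(\sum_{l=1}^{n}c_l\right)^p\le n^{p-1}\sum_{l=1}^{n}c_l^p$, valid for $c_l\ge 0$ and $p\ge 1$, which is just Jensen's inequality for the convex function $t\mapsto t^p$. The hypothesis $x\in\widehat{C}^p$ is needed only to guarantee that every quantity appearing below is finite.

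For the left-hand (super-additivity) inequality I would argue as follows. Fix, for each $i$, an arbitrary partition $\Pi_i$ of $[a_i,a_{i+1}]$. Since the points $a_1,\dots,a_k$ serve as common endpoints, the union $\Pi:=\bigcup_{i=1}^{k-1}\Pi_i$ is a single partition of $[a_1,a_k]$, and by construction its variation sum splits as $\sum_{i=1}^{k-1}\big(\text{sum over }\Pi_i\big)$. This combined sum is at most $\ltn x\rtn^p_{p\text{-var},[a_1,a_k]}$ because $\Pi$ is one admissible partition. Taking the supremum over each $\Pi_i$ independently then yields $\sum_{i=1}^{k-1}\ltn x\rtn^p_{p\text{-var},[a_i,a_{i+1}]}\le \ltn x\rtn^p_{p\text{-var},[a_1,a_k]}$.

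For the right-hand inequality I would start from an arbitrary partition $\Pi$ of $[a_1,a_k]$ and refine it to $\Pi':=\Pi\cup\{a_1,\dots,a_k\}$. Each interval $[s_j,s_{j+1}]$ of $\Pi$ is thereby cut into at most $k-1$ consecutive pieces by the breakpoints lying inside it (only the $k-2$ interior points $a_2,\dots,a_{k-1}$ are available, hence at most $k-1$ pieces). Writing $x_{s_{j+1}}-x_{s_j}$ as the telescoping sum of the increments over these pieces and applying the power-mean inequality with $n\le k-1$ gives $|x_{s_{j+1}}-x_{s_j}|^p\le (k-1)^{p-1}\times(\text{sum of the piecewise }p\text{-th powers})$. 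Summing over $j$ shows the variation sum of $\Pi$ is bounded by $(k-1)^{p-1}$ times the variation sum of $\Pi'$. Finally, since $\Pi'$ contains every $a_i$, it restricts to a partition of each $[a_i,a_{i+1}]$, so its variation sum is at most $\sum_{i=1}^{k-1}\ltn x\rtn^p_{p\text{-var},[a_i,a_{i+1}]}$; taking the supremum over $\Pi$ closes the bound.

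The step I expect to require the most care is the right-hand inequality, precisely because for $p>1$ refining a partition need \emph{not} increase the variation sum, so the naive ``finer partitions are larger'' intuition from the $p=1$ case fails. The correct mechanism is not monotonicity under refinement but the controlled loss quantified by the power-mean inequality, and the crucial bookkeeping point is that no single interval of $\Pi$ can be split into more than $k-1$ pieces, which is exactly what produces the sharp constant $(k-1)^{p-1}$. The left-hand inequality, by contrast, is a soft supremum argument and should be routine.
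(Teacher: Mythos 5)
Your proposal is correct and is essentially the paper's own argument: the paper's proof merely points to the standard one in Friz--Victoir, ``by using triangle inequality and power means inequality,'' i.e.\ precisely the inequality $\left(\sum_{l=1}^{n}c_l\right)^p\leq n^{p-1}\sum_{l=1}^{n}c_l^p$ that you invoke, with the left inequality obtained by concatenating partitions and the right one by refining with the breakpoints $a_2,\dots,a_{k-1}$ and telescoping over at most $k-1$ pieces per interval. Nothing further is needed.
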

\begin{proof}
The proof is similar to the one in \cite[p.\ 84]{friz}, by using triangle inequality and power means inequality
 \begin{equation*}
 \frac{1}{n}\sum_{i=1}^n z_i \leq \left(\frac{1}{n}\sum_{i=1}^nz_i^r\right)^{1/r},\quad \forall z_i\geq 0, r\geq 1.
 \end{equation*}
\end{proof}
\begin{definition}
A continuous map $\overline{\omega}: \Delta[a,b]\longrightarrow \R^+$ is called a control if it is zero on the diagonal and superadditive, i.e\\
(i), For all $t\in [a,b]$, $\overline{\omega}_{t,t}=0$,\\
(ii), For all $s\leq t\leq u$ in $[a,b]$, $\overline{\omega}_{s,t}+\overline{\omega}_{t,u}\leq \overline{\omega}_{s,u}$.
\end{definition}
The functions $(s,t)\longrightarrow (t-s)^{\theta}$ with $\theta \geq 1$, and  $(s,t)\longrightarrow \ltn x\rtn^q_{p\text{-var},[s,t]}$, where $x$ is of bounded $p$-variation norm on $[a,b]$ and $q\geq p$ are some examples of control function. The following lemma gives a useful property of controls in relation with variations of a path (see \cite{friz} for more properties of control functions).
\begin{lemma}\label{controled}
Let $\overline{\omega}^j$ be a finite sequence of control functions on $[0,T]$, $C_j>0,\;\;j=\overline{1,k}$, $p\geq 1$ and  $x:[0,T]\rightarrow \R^d$ be a continuous path satisfying
$$|x_t-x_s|\leq \sum_{i=j}^k C_j \overline{\omega}^j(s,t)^{1/p},\;\; \forall s<t \in [0,T].$$
Then 
\begin{eqnarray}
\ltn x\rtn_{p\text{-}{\rm {var}},[s,t]}\leq \sum_{j=1}^k C_j \overline{\omega}^j(s,t)^{1/p},\;\; \forall s<t \in [0,T].
\end{eqnarray}
\end{lemma}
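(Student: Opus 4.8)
The plan is to fix an arbitrary finite partition of $[s,t]$, bound the associated $p$-variation sum by the desired right-hand side independently of the partition, and then pass to the supremum. So let $s = u_0 < u_1 < \cdots < u_n = t$ be any finite partition. Applying the hypothesis on each subinterval $[u_i,u_{i+1}]$ gives $|x_{u_{i+1}}-x_{u_i}|\leq \sum_{j=1}^k C_j\,\overline{\omega}^j(u_i,u_{i+1})^{1/p}$, and hence
\begin{equation*}
\left(\sum_{i=0}^{n-1} |x_{u_{i+1}}-x_{u_i}|^p\right)^{1/p} \leq \left(\sum_{i=0}^{n-1}\left(\sum_{j=1}^k C_j\,\overline{\omega}^j(u_i,u_{i+1})^{1/p}\right)^p\right)^{1/p}.
\end{equation*}

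The decisive step is to interchange the two summations by reading the inner sum over $j$ as a sum of vectors measured in the $\ell^p$-norm over the index $i$. Writing $a^j=(a^j_i)_i$ with $a^j_i:=C_j\,\overline{\omega}^j(u_i,u_{i+1})^{1/p}$, the right-hand side above is exactly $\big\|\sum_{j=1}^k a^j\big\|_{\ell^p}$, and since $p\geq 1$ Minkowski's inequality yields $\big\|\sum_j a^j\big\|_{\ell^p}\leq \sum_j \|a^j\|_{\ell^p}$. Therefore
\begin{equation*}
\left(\sum_{i=0}^{n-1}\left(\sum_{j=1}^k C_j\,\overline{\omega}^j(u_i,u_{i+1})^{1/p}\right)^p\right)^{1/p} \leq \sum_{j=1}^k C_j\left(\sum_{i=0}^{n-1}\overline{\omega}^j(u_i,u_{i+1})\right)^{1/p}.
\end{equation*}

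Now I would invoke the superadditivity of each control $\overline{\omega}^j$: applied inductively along the partition (merging adjacent pairs never increases the total), it gives $\sum_{i=0}^{n-1}\overline{\omega}^j(u_i,u_{i+1})\leq \overline{\omega}^j(s,t)$. Substituting this bound into the previous display controls the partition sum by $\sum_{j=1}^k C_j\,\overline{\omega}^j(s,t)^{1/p}$, uniformly in the chosen partition. Taking the supremum over all finite partitions of $[s,t]$ in the definition of $\ltn x\rtn_{p\text{-var},[s,t]}$ then delivers the claimed estimate.

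I expect the only genuine subtlety to be the Minkowski step: one must recognize the expression as a sum of vectors in $\ell^p$ and note that $p\geq 1$ is precisely the regime in which the triangle inequality holds, whereas the superadditivity collapse and the final passage to the supremum are routine. As a sanity check, the single-control case $k=1$ reduces to the familiar bound $\ltn x\rtn_{p\text{-var},[s,t]}\leq C_1\,\overline{\omega}^1(s,t)^{1/p}$, consistent with the examples of controls listed just before the lemma.
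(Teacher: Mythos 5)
Your proof is correct and follows essentially the same route as the paper's: bound the $p$-variation sum over an arbitrary partition via Minkowski's inequality in $\ell^p$ (interchanging the sums over $i$ and $j$), collapse each $\sum_i \overline{\omega}^j(u_i,u_{i+1})$ by superadditivity, and take the supremum over partitions. Your added remarks on why $p\geq 1$ is the right regime for Minkowski and on the $k=1$ sanity check are fine but do not change the argument.
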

\begin{proof}
Consider an arbitrary finite partition $\Pi = (s_i)$, $i=0\ldots, n+1$, of $[s,t]$. By assumption and Minskowski inequality we have
\begin{eqnarray*}
\left(\sum_{i=0}^n|x_{s_{i+1}}-x_{s_i}|^p\right)^{1/p}&\leq & \left[\sum_{i=0}^n\left(\sum_{j=1}^k C_j \overline{\omega}^j(s_i,s_{i+1})^{1/p}\right)^p\right]^{1/p}\\
&\leq & \left(\sum_{i=0}^n C_1^p\overline{\omega}^1(s_i,s_{i+1})\right)^{1/p}+\dots+ \left(\sum_{i=0}^nC_k^p\overline{\omega}^k(s_i,s_{i+1})\right)^{1/p} \\
&\leq &  C_1 \left(\sum_{i=0}^n\overline{\omega}^1(s_i,s_{i+1})\right)^{1/p}+\dots+ C_k\left(\sum_{i=0}^n\overline{\omega}^k(s_i,s_{i+1})\right)^{1/p} \\
&\leq & \sum_{j=1}^k C_j  \overline{\omega}^j(s,t)^{1/p}.
\end{eqnarray*}
This implies the conclusion of the lemma.
\end{proof}

Now, consider $x\in \widehat{C}^{q}([a,b],\R^{d\times m})$ and $\omega\in \widehat{C}^p([a,b],\R^m)$, $p,q \geq 1$, if Riemann-Stieltjes sums for finite partition $\Pi=\{ a=t_0<t_1<\cdots < t_n=b \}$ of $[a,b]$ and any $\xi_i \in [t_i,t_{i+1}]$
\begin{eqnarray}\label{RSdef}
S_\Pi &:=& \sum_{i=1}^n x_{\xi_i}(\omega_{t_{i+1}}-\omega_{t_i}) ,
\end{eqnarray}
converges as the mesh $|\Pi| := \displaystyle\min_{0\leq i \leq n-1} |t_{i+1}-t_i|$  tends to zero, we call the limit is the Young integral of $x$ w.r.t $\omega$ on $[a,b]$ denoted by $\int_a^b x_td\omega_t$.
It is well known that if $p,q\geq 1$ and $\frac{1}{p}+\frac{1}{q}  > 1$, the Young integral $\int_a^bx_td\omega_t$ exists (see \cite[p.\ 264--265]{young}). 
Moreover, if $x^n$ and $\omega^n$ are of bounded variation, uniformly bounded in  $\widehat{C}^q([a,b],\R^{d\times m})$,  $\widehat{C}^p([a,b],\R^{d\times m})$ and  converges uniformly to $x$, $\omega$ respectively, then the sequence of the Riemann-Stieljes integral $\int_a^b x^n_td\omega^n_t$ approach $\int_a^bx_td\omega_t$ as $n\to \infty$ (see \cite{friz}). This integral satisfies additive property by the construction, and the so-called Young-Loeve estimate \cite[Theorem 6.8, p.\ 116]{friz}
\begin{equation}\label{YL0}
\Big|\int_s^t x_ud\omega_u-x_s[\omega_t-\omega_s]\Big| \leq K \ltn x\rtn_{q\text{-var},[s,t]} \ltn\omega\rtn_{p\text{-var},[s,t]},
\end{equation}
where 
\begin{equation}\label{constK}
K:=(1-2^{1-\theta})^{-1},\qquad \theta := \frac{1}{p} + \frac{1}{q} >1.
\end{equation}

\begin{lemma}\label{lemma1}
	For $1\leq p, 1\leq q$ such that $\theta = \frac{1}{p}+\frac{1}{q}>1$ and $x\in \widehat{C}^{q}([a,b],\R^{d\times m})$, $\omega\in \widehat{C}^p([a,b],\R^m)$, the following estimates hold
	\begin{eqnarray}
	\ltn\int_a^.x_ud\omega_u\rtn_{p\text{-}{\rm {var}},[a,b]}&\leq& \ltn\omega\rtn_{p\text{-}{\rm {var}},[a,b]}\left(|x_a|+(K+1)\ltn x\rtn_{q\text{-var},[a,b]}\right),\label{YL1}
	\end{eqnarray}
	where $K$ is determined by \eqref{constK}.
\end{lemma}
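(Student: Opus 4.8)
The plan is to set $z_t := \int_a^t x_u\,d\omega_u$ and to produce a bound on the increment $|z_t - z_s|$ over an arbitrary subinterval $[s,t]\subseteq[a,b]$ that has the form ``constant times a control to the power $1/p$'', so that Lemma~\ref{controled} applies directly with a single control function. The natural tool for the increment bound is the Young--Loeve estimate \eqref{YL0}, which after adding and subtracting $x_s[\omega_t-\omega_s]$ gives
\begin{equation*}
|z_t - z_s| = \Big|\int_s^t x_u\,d\omega_u\Big| \leq |x_s|\,|\omega_t-\omega_s| + K\ltn x\rtn_{q\text{-var},[s,t]}\ltn\omega\rtn_{p\text{-var},[s,t]}.
\end{equation*}

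First I would dispose of the two local quantities $|x_s|$ and $\ltn x\rtn_{q\text{-var},[s,t]}$ by replacing them with global ones that do not depend on $[s,t]$. Writing $|x_s|\leq |x_a| + \ltn x\rtn_{q\text{-var},[a,s]}\leq |x_a| + \ltn x\rtn_{q\text{-var},[a,b]}$, and using $|\omega_t-\omega_s|\leq \ltn\omega\rtn_{p\text{-var},[s,t]}$ together with the monotonicity $\ltn x\rtn_{q\text{-var},[s,t]}\leq \ltn x\rtn_{q\text{-var},[a,b]}$, the two terms combine into
\begin{equation*}
|z_t-z_s| \leq \big(|x_a| + (K+1)\ltn x\rtn_{q\text{-var},[a,b]}\big)\,\ltn\omega\rtn_{p\text{-var},[s,t]}.
\end{equation*}
This is exactly where the factor $K+1$ appears: the $+1$ is the contribution of $|x_s|$ through the drift-free term $x_s[\omega_t-\omega_s]$, while $K$ comes from the Young--Loeve remainder.

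The final step is to recognize that $(s,t)\mapsto \ltn\omega\rtn^p_{p\text{-var},[s,t]}$ is a control function (it is the example listed just after the definition of control, taken with exponent $p$), so the previous display reads $|z_t-z_s|\leq C\,\overline{\omega}(s,t)^{1/p}$ with $C := |x_a| + (K+1)\ltn x\rtn_{q\text{-var},[a,b]}$ and $\overline{\omega}(s,t):=\ltn\omega\rtn^p_{p\text{-var},[s,t]}$. Applying Lemma~\ref{controled} with this single control (i.e.\ $k=1$) yields $\ltn z\rtn_{p\text{-var},[s,t]}\leq C\,\ltn\omega\rtn_{p\text{-var},[s,t]}$ for all $s<t$, and taking $[s,t]=[a,b]$ gives precisely \eqref{YL1}.

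There is no serious obstacle here; the only point requiring care is ensuring that the constant in the increment bound is genuinely independent of the subinterval $[s,t]$, which is what makes Lemma~\ref{controled} applicable. This forces the slightly lossy step of bounding the local $q$-variation and the local value $|x_s|$ by their counterparts on the whole of $[a,b]$, and it is precisely this globalization that produces the clean multiplicative form of the stated estimate.
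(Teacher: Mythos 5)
Your proof is correct and follows essentially the same route as the paper: both start from the Young--Loeve estimate \eqref{YL0}, globalize $|x_s|$ and the local $q$-variation to obtain the increment bound $\bigl(|x_a|+(K+1)\ltn x\rtn_{q\text{-var},[a,b]}\bigr)\ltn\omega\rtn_{p\text{-var},[s,t]}$, and then pass from increments to $p$-variation via the control $(s,t)\mapsto\ltn\omega\rtn^p_{p\text{-var},[s,t]}$. The only cosmetic difference is that you invoke Lemma~\ref{controled} for the last step while the paper cites Proposition~5.10(i) of Friz--Victoir, which is the same control argument.
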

\begin{proof}
	To prove \eqref{YL1}, we note that by virtue of \eqref{YL0} for all $a\leq s\leq t\leq b$ we have
		\begin{eqnarray}
		\left|\int_s^tx_ud\omega_u\right|&\leq & |x_s||(\omega_t-\omega_s)|+ K\ltn\omega\rtn_{p\text{-var},[s,t]}\ltn x\rtn_{q\text{-var},[s,t]}\notag\\
		&\leq &  \ltn\omega\rtn_{p\text{-var},[s,t]}\left(\|x\|_{\infty,[a,b]} + K \ltn x\rtn_{q\text{-var},[a,b]}\right)\notag\\
		&\leq &  \ltn\omega\rtn_{p\text{-var},[s,t]}\left(|x_a|+ (K+1) \ltn x\rtn_{q\text{-var},[a,b]}\right).\notag
		\end{eqnarray}
		
		Since $\bar{\omega}(s,t):=\ltn\omega\rtn^p_{p\text{-var},[s,t]}$ is a control on the simplex $\Delta[a,b]$ (see \cite[Proposition 5.8, p. 80]{friz}), $\int_a^.x_ud\omega_u$ is of bounded $p$-variation and  
		$$\ltn\int_a^.x_ud\omega_u\rtn_{p\text{-var},[a,b]}\leq \ltn\omega\rtn_{p\text{-var},[a,b]}\left(|x_a|+(K+1)\ltn x\rtn_{q\text{-var},[a,b]}\right)$$
		due to  \cite[Proposition 5.10(i), p. 83]{friz}.
\end{proof}
\medskip

Due to Lemma \ref{lemma1}, the integral $t\mapsto \int_a^tx_sd\omega_s$ is a continuous bounded $p$-variation path.
Note that the definition of Young integral does depend on the direction of integration in a simple way like the Riemann-Stieltjes integral. Namely, it is easy to see that
\begin{eqnarray}
\int_b^ax_ud\omega_u = \displaystyle\lim_{\Pi(a,b), |\Pi|\to 0} \sum_{i=1}^n x_{\xi_i}(\omega_{t_{i}}-\omega_{t_{i+1}}) = - \displaystyle\lim_{\Pi(a,b), |\Pi|\to 0} \sum_{i=1}^n x_{\xi_i}(\omega_{t_{i+1}}-\omega_{t_i}) = -\int_a^bx_ud\omega_u.\label{Y-integral-direction}
\end{eqnarray}

\subsection{Greedy times analysis}\label{subsec.greedy-times}
Denote by $\widetilde{C}^{p}(\R,\R^m)$ the space of all continuous functions $\omega: \R \to \R^m$ such that for any $T>0$ the restrictions of $\omega$ to $[-T,T]$ is of $\widehat{C}^{p}([-T,T],\R^m)$. Equip $\widetilde{C}^{p}(\R,\R^m)$ with the metric
\[
d(\omega^1,\omega^2) := \sum_{n=1}^\infty 2^{-n} \frac{\|\omega^1 - \omega^2\|_{p\text{-var},[-n,n]}}{1+\|\omega^1 - \omega^2\|_{p\text{-var},[-n,n]}}.
\]
Let $n\in \N$, observe that metric $d$ satisfies
 \begin{equation}\label{metricd}
 \begin{array}{rcl}
d(\omega^1,\omega^2) &\leq& \|\omega_1 - \omega^2\|_{p\text{-var},[-n,n]} + 2^{-n},\\[6pt] 
 \|\omega^1 -\omega^2\|_{p\text{-var},[-n,n]} &\leq& \frac{2^n d(\omega^1,\omega^2)}{1- 2^n d(\omega^1,\omega^2)},
 \end{array}
\end{equation}
where the second inequality holds for any fixed $n$ and $\omega^1, \omega^2$ close enough such that $2^n d(\omega^1,\omega^2) <1$. Hence every Cauchy sequence $(\omega^k)_k$ w.r.t. metric $d$ is also a Cauchy sequence when restricted to $\widehat{C}^p([-n,n],\R^m)$, thus converges to a limit $\omega^*\in \widehat{C}^p([-n,n],\R^m)$ which is uniquely defined pointwise, so $\omega^* \in \widetilde{C}^{p}(\R,\R^m)$. Therefore, $(\widetilde{C}^{p}(\R,\R^m),d)$ is a complete metric space.
\begin{remark}\label{truncated}
(i) {\em Truncation:} Another consequence of \eqref{metricd} is that the truncated version of $\omega \in \widetilde{C}^{p}(\R,\R^m)$ in any $\widehat{C}^p([-n,n],\R^m)$ differs very small w.r.t. metric $d$ from the original $\omega$ if we choose $n$ large enough. Moreover, if a function is continuous w.r.t. $\omega$ on any restriction in $\widehat{C}^p([-n,n],\R^m)$ for any $n >0$ then it is also continuous w.r.t. $\omega$ in $\widetilde{C}^{p}(\R,\R^m)$ with respect to metric $d$. 

(ii) {\em Concatenation:} Let $a<b<c$. Suppose that
$ \omega^1 \in \widehat{C}^{p}([a,b],\R^m)$, $\omega^2 \in \widehat{C}^{p}([b,c],\R^m)$ and 
$\omega^1_b=\omega^2_b$. Then $\omega^1. 1_{[a,b]} +\omega^2.1_{[b,c]}$ belongs to $\widehat{C}^{p}([a,c],\R^m)$. 
\end{remark}

For  any given $\lambda,\mu>0$ we construct a strict increasing sequence of greedy times $\{\tau_n\}$, 
$$
\tau_n:\widetilde{C}^{p}(\R,\R^m)\longrightarrow\R^+,
$$  
such that $\tau_0\equiv 0$ and 
\begin{equation}\label{stoppingtime}
|\tau_{i+1}(\omega)-\tau_i(\omega)|^\lambda+\ltn\omega\rtn_{p\text{-var},[\tau_i(\omega),\tau_{i+1}(\omega)]} = \mu.
\end{equation}
To do so, first define $\tau: \widetilde{C}^{p}(\R,\R^m)\longrightarrow\R^+$ such that
\[
\tau (\omega) := \sup \{t\geq 0: t^\lambda + \ltn \omega \rtn_{p\text{-var},[0,t]} \leq \mu\}.
\]  
Observe that the function $\kappa(t):= t^\lambda + \ltn \omega \rtn_{p\text{-var},[0,t]}$ is continuous and stricly increasing w.r.t. $t$ with $\kappa (0) =0$ and $\kappa(\infty) = \infty$, therefore due to the continuity there exists a unique $\tau= \tau(\omega)>0$ such that
\begin{equation}\label{stoptime1}
\tau^\lambda + \ltn \omega \rtn_{p\text{-var},[0,\tau]} = \mu.
\end{equation}
Thus $\tau$ is well defined. Next, we construct the so-called greedy times inductively as follows. 
Set $\tau_0 := 0$, $\tau_1 (\omega) := \tau(\omega) $. Suppose that we have defined $\tau_n (\omega)$ for $n\geq 1$, looking at the following equality as an equation of $\delta_n(\omega)\in \R^+$,  like above we find an unique $\delta_n(\omega)$
 such that
$$
\mu = \delta_n^\lambda(\omega) + \ltn \omega(\cdot + \tau_n(\omega))\rtn_{p-var,[0,\delta_n(\omega)]},
$$
hence we can set
\begin{equation}\label{stoptime2}
\tau_{n+1}(\omega) := \tau_{n-1}(\omega) + \delta_n(\omega), 
\end{equation}
where $\delta_n(\omega)$ is determined above. Thus we have defined a sequence of greedy times $\{\tau_n\}$ for all $n=0,1,2,\ldots$.
Such a sequence of greedy times then satisfies \eqref{stoppingtime}.
\medskip

Now, we fix $\omega\in \widetilde{C}^{p}(\R,\R^m)$ and consider the number of greedy times inside an arbitrary finite interval of $\R^+$. We write $\tau_n$ for $\tau_n(\omega)$ to simplify the notation. For given $T>0$, we introduce the notation 
\begin{equation}\label{N}
N(T,\omega):= \sup \{n: \tau_n\leq T\}<\infty.
\end{equation}
or more general, for any $0\leq a < b <\infty$,
\begin{eqnarray}
N(a,b,\omega)&:=& \sup\{n: \tau_n\leq b\}- \inf\{n: \tau_n\geq a\}.
\end{eqnarray}
\begin{lemma}
Let $p'\geq \max\{p,\frac{1}{\lambda}\}$ be arbitrary, the following estimate holds
\begin{eqnarray}\label{mu2}
N(T,\omega)&\leq& \frac{2^{p'-1}}{\mu^{p'}} \Big( T^{p'\lambda}+\ltn\omega\rtn^{p'}_{p\text{-var},[0,T]}\Big).
\end{eqnarray}
More general,
\begin{eqnarray}\label{Nab2}
N(a,b,\omega)&\leq& \frac{2^{p'-1}}{\mu^{p'} }\Big[ (b-a)^{p'\lambda}+\ltn\omega\rtn^{p'}_{p\text{-}\rm{var},[a,b]}\Big].
\end{eqnarray}
\end{lemma}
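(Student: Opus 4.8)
The plan is to sum the defining identity \eqref{stoppingtime} over all complete greedy intervals lying in $[0,T]$ and then convert the resulting two sums into a single time term and a single variation term by exploiting superadditivity of controls. Concretely, I would set $N:=N(T,\omega)=\sup\{n:\tau_n\le T\}$ and note that the consecutive intervals $[\tau_i,\tau_{i+1}]$, $i=0,\dots,N-1$, are non-overlapping with union $[0,\tau_N]\subseteq[0,T]$, and on each of them \eqref{stoppingtime} gives $|\tau_{i+1}-\tau_i|^\lambda+\ltn\omega\rtn_{p\text{-var},[\tau_i,\tau_{i+1}]}=\mu$. Raising to the power $p'\ge1$ and summing over $i$ yields
\[
N\mu^{p'}=\sum_{i=0}^{N-1}\Big(|\tau_{i+1}-\tau_i|^\lambda+\ltn\omega\rtn_{p\text{-var},[\tau_i,\tau_{i+1}]}\Big)^{p'}.
\]

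Next I would apply the elementary convexity inequality $(x+y)^{p'}\le 2^{p'-1}(x^{p'}+y^{p'})$, valid for $x,y\ge0$ and $p'\ge1$, to separate the two contributions,
\[
N\mu^{p'}\le 2^{p'-1}\Big(\sum_{i=0}^{N-1}(\tau_{i+1}-\tau_i)^{p'\lambda}+\sum_{i=0}^{N-1}\ltn\omega\rtn^{p'}_{p\text{-var},[\tau_i,\tau_{i+1}]}\Big),
\]
and then bound each sum by superadditivity. Here I would use the fact that a control raised to a power $\ge1$ is again a control, since $(u+v)^\beta\ge u^\beta+v^\beta$ for $u,v\ge0$, $\beta\ge1$; because $(s,t)\mapsto(t-s)$ and $(s,t)\mapsto\ltn\omega\rtn^p_{p\text{-var},[s,t]}$ are controls, the maps $(s,t)\mapsto(t-s)^{p'\lambda}$ and $(s,t)\mapsto\ltn\omega\rtn^{p'}_{p\text{-var},[s,t]}$ are controls as soon as $p'\lambda\ge1$ and $p'/p\ge1$, both of which are guaranteed by the hypothesis $p'\ge\max\{p,1/\lambda\}$. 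Telescoping each superadditive sum over the partition $0=\tau_0<\dots<\tau_N\le T$ then gives $\sum_i(\tau_{i+1}-\tau_i)^{p'\lambda}\le\tau_N^{p'\lambda}\le T^{p'\lambda}$ and $\sum_i\ltn\omega\rtn^{p'}_{p\text{-var},[\tau_i,\tau_{i+1}]}\le\ltn\omega\rtn^{p'}_{p\text{-var},[0,T]}$, and dividing by $\mu^{p'}$ produces \eqref{mu2}.

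For the general estimate \eqref{Nab2} I would run the identical argument over the index block from $M:=\inf\{n:\tau_n\ge a\}$ to $N:=\sup\{n:\tau_n\le b\}$: the $N-M=N(a,b,\omega)$ complete intervals $[\tau_i,\tau_{i+1}]$, $i=M,\dots,N-1$, all lie in $[\tau_M,\tau_N]\subseteq[a,b]$, so the same convexity-plus-superadditivity step replaces $T^{p'\lambda}$ and $\ltn\omega\rtn^{p'}_{p\text{-var},[0,T]}$ by $(b-a)^{p'\lambda}$ and $\ltn\omega\rtn^{p'}_{p\text{-var},[a,b]}$. I expect the only genuinely delicate point to be the one just isolated, namely checking that the two exponent conditions bundled into $p'\ge\max\{p,1/\lambda\}$ are exactly what is needed so that both controls survive being raised to the power $p'$ while remaining superadditive; everything else is the bookkeeping of counting complete greedy intervals together with the two elementary inequalities.
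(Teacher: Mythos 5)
Your proposal is correct and follows essentially the same route as the paper's proof: sum the defining identity \eqref{stoppingtime} raised to the power $p'$, split with $(x+y)^{p'}\leq 2^{p'-1}(x^{p'}+y^{p'})$, and telescope each sum by superadditivity (the paper phrases the $\omega$-term as $\bigl(\sum_i\ltn\omega\rtn^p_{p\text{-var},[\tau_i,\tau_{i+1}]}\bigr)^{p'/p}\leq\ltn\omega\rtn^{p'}_{p\text{-var},[0,\tau_n]}$, which is the same as your observation that a control raised to a power $\geq 1$ remains superadditive). Your explicit treatment of the index block from $\inf\{n:\tau_n\geq a\}$ to $\sup\{n:\tau_n\leq b\}$ merely spells out what the paper dismisses with ``Similarly, \eqref{Nab2} holds.''
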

\begin{proof}
  We have for all $n\in \N^*$
\begin{eqnarray}\label{estN}
n\mu^{p'} &= & \sum_{i=0}^{n-1} \mu^{p'} = \sum_{i=0}^{n-1}\left[ |\tau_{i+1}-\tau_i|^{\lambda}+\ltn\omega\rtn_{p\text{-var},[\tau_i,\tau_{i+1}]}\right]^{p'} \notag\\
&\leq& 2^{p'-1}\left[\sum_{i=0}^{n-1} |\tau_{i+1}-\tau_i|^{p'\lambda }+ \Big(\sum_{i=0}^{n-1} \ltn\omega\rtn^p_{p\text{-var},[\tau_i,\tau_{i+1}]}\Big)^{p'/p}\right]\notag\\
&\leq& 2^{p'-1}\left[(\tau_n-\tau_0)^{p'\lambda }+ \Big(\sum_{i=0}^{n-1} \ltn\omega\rtn^p_{p\text{-var},[\tau_i,\tau_{i+1}]}\Big)^{p'/p}\right]\notag\\
&\leq &  2^{p'-1}\left[\tau_n^{p'\lambda} + \ltn\omega\rtn^{p'}_{p\text{-var},[0,\tau_n]}\right].
\end{eqnarray}
Consequently, we obtain
\begin{eqnarray*}
N(T,\omega)&\leq& \frac{2^{p'-1}}{\mu^{p'}} \Big[ T^{p'\lambda}+\ltn\omega\rtn^{p'}_{p\text{-var},[0,T]}\Big].
\end{eqnarray*}
Similarly, \eqref{Nab2} holds.
\end{proof}
\begin{remark}
\begin{enumerate}
\item  Since the left-hand side of \eqref{estN} tends to infinite its right hand side cannot be bounded. This implies that $\tau_n\to\infty$ as $n\to \infty$.
\item We can construct the sequence of greedy times starts at $\tau_0=t_0$, an arbitrary point in $\R$, and on $(-\infty, t_0])$ in a similar manner.
\item The original idea of greedy times was introduced in \cite{cassetal} for autonomous systems. A version of stopping times was developped before by \cite{GAMSch} and then by \cite{DGANS}. Here we propose another version of greedy times which fits with the nonautonomous setting.
\end{enumerate}
\end{remark}

\section{Existence and uniqueness theorem}\label{subsec.FDE}
In this section, we are working with the restriction of any trajectory $\omega$ in a given time interval $[0,T]$ by consider it as an element in  $\widehat{C}^{p}([0,T],\R^m)$, for a certain $p \in (1,2)$ (see Remark \ref{truncated} for the relation between $\omega \in \widetilde{C}^{p}(\R,\R^m)$ and its restrictions). Consider the Young differential equation in the integral form as:
\begin{equation}\label{integral.eq.Stieltes}
x_t=x_{0} +\int_0^t f(s,x_s) ds + \int_0^tg(s,x_s)d\omega_s,\;\; t\in [0,T].
\end{equation}
We recall here a result in \cite{nualart3} on existence and uniqueness of solution of \eqref{integral.eq.Stieltes}, which was proved using contraction mapping arguments with $\omega$ in a Besov-type space. In this paper we however would like to derive a proof in $\widehat{C}^p$ applying Shauder fixed point theorem and greedy time tool. First we need to formulate some assumptions on  the coefficient functions $f$ and $g$ of \eqref{integral.eq.Stieltes}.\\
 
 (${\textbf H}_1$) $g(t,x)$ is differentiable in $x$ and there exist some constants $0<\beta,\delta\leq 1$, a control function $h(s,t)$ defined on $\Delta[0,T]$ and for every $N\geq 0$ there exists $M_N>0$ such that the following properties hold:
 $$
 (H_g) : \begin{cases}
 (i)\quad \hbox{Lipschitz continuity}\\
 |g(t,x) - g(t,y)| \leq L_g |x-y|,\quad \forall x,y \in\R^d, \quad \forall t\in [0,T],\\
 (ii) \quad \hbox{Local H\"older continuity}\\
 |\partial_{x_i}g(t,x)- \partial_{y_i}g(t,y)| \leq M_N |x-y|^\delta, \\
 \hspace*{3cm} \quad \forall  x,y\in\R^d,\; |x|,|y|\leq N,\quad \forall t\in [0,T],\\
 (iii) \quad \hbox{Generalized H\"older continuity in time} \\
 |g(t,x)-g(s,x)| + |\partial_{x_i}g(t,x) - \partial_{x_i}g(s,x)| \leq h(s,t)^\beta\\
  \hspace*{3cm} \quad  \forall x\in\R^d, \quad\forall s,t\in [0,T], s<t.
  \end{cases}
  $$
  
  (${\textbf H}_2$) There exists $a>0$ and $b\in L_{\frac{1}{1-\alpha}}([0,T],\R^d)$, where $\frac{1}{2} \leq \alpha <1 $, and for every $N\geq 0$ there exists $L_N >0$ such that the following properties hold:
  $$
 (H_f) : \begin{cases}
 (i)\quad \hbox{Local Lipschitz continuity}\\
 |f(t,x) - f(t,y)| \leq L_N |x-y|, \quad \forall x,y\in\R^d,\;|x|,|y|\leq N, \quad\forall t\in [0,T],\\
 (ii) \quad \hbox{Boundedness}\\
 |f(t,x)| \leq a |x| + b(t), \quad\forall x\in \R^d,\quad \forall t\in [0,T].\\
  \end{cases}
  $$
  
  (${\textbf H}_3$) The parameters in ${\textbf H}_1$ and ${\textbf H}_2$ statisfy the inequalities $\delta>p-1,\;\;\beta >1-\dfrac{1}{p},\;\; \delta\alpha > 1-\dfrac{1}{p}$. \\  

We would like to study the existence and uniqueness of the solution of \eqref{integral.eq.Stieltes} under the given conditions that $x\in \widehat{C}^{q}([0,T],\R^d)$ with appropriate constant $q>0$.

By the assumption $ p\in (1,2)$ and the condition ${\textbf H}_3$,  $1-\dfrac{1}{p}<\min\left\{\beta,\delta\alpha, \dfrac{\delta}{p},\dfrac{1}{2} \right\}$, thus we can choose consecutively constants $q_0,q$ such that
\begin{eqnarray}
1-\dfrac{1}{p}< &\dfrac{1}{q_0}& <\min\left\{\beta,\delta\alpha, \dfrac{\delta}{p},\dfrac{1}{2} \right\}, \label{q0}\\
\frac{1}{q_0\delta } \leq &\dfrac{1}{q} & < \min\{\alpha, \dfrac{1}{p}\}. \label{q} 
\end{eqnarray}
Then, we have
	\begin{equation}\label{relation-p-q-alpha}
	\frac{1}{p}+\frac{1}{q_0} > 1,\;\; q_0\beta >1,\;\; q_0\geq q_0\delta \geq q> p,\;\; q\alpha > 1.
	\end{equation}

We now consider $x\in \widehat{C}^{q}([t_0,t_1],\R^d)$ with some $[t_0,t_1] \subset[0,T]$. Define the mapping given by
\begin{eqnarray}
F(x)_t &= &x_{t_0} + I(x)_t+J(x)_t\nonumber\\
&:=& x_{t_0} + \int_{t_0}^t f(s,x_s) ds +\int_{t_0}^t g(s,x_s)d\omega_s, \quad\forall t\in [t_0,t_1].\label{eqn.F}
\end{eqnarray}
Note that a fixed point of $F$ is a solution of \eqref{integral.eq.Stieltes} on $[t_0,t_1]$ with the boundary condition $x(t_0)=x_{t_0}$ (the initial condition $x_0$ of \eqref{integral.eq.Stieltes} is then not given).

Introduce the notations
\begin{eqnarray}\label{M}
M&:=&\max\{L_g,aT^{1-\alpha},|g(0,0)|+h(0,T)^\beta,\|b\|_{L_{\frac{1}{1-\alpha}}}\},\\ 
M'_N&:=&\max\{L_N,M_N,M\},\qquad \forall N>0.
\end{eqnarray}
It can be seen from the above assumptions that $|g(t,x)|\leq |g(t,0)|+ L_g|x|$ and $|g(t,0)|\leq |g(0,0)|+ h(0,T)^{\beta} $, hence 
\begin{equation}\label{g}
|g(t,x)|\leq |g(0,0)|+ h(0,T)^{\beta}+ L_g|x| \leq M(1+|x|).
\end{equation}
For the next propositions we need the following auxiliary lemma.
\begin{lemma}\label{lemma2}
	Assume that ${\textbf H}_1-{\textbf H}_3$ are satisfied.\\
	(i) If $x \in \widehat{C}^{q}([t_0,t_1],\R^d)$ then $g(\cdot,x_.) \in \widehat{C}^{q_0}([t_0,t_1],\R^{d\times m})$ and 
	\begin{eqnarray}\label{gx}
	\ltn g(\cdot,x_.)\rtn_{q_0\text{-}\rm{var},[t_0,t_1]}\leq M(1+\ltn x\rtn_{q\text{-}\rm{var},[t_0,t_1]}).
	\end{eqnarray}
	(ii) For all $s< t$ and for all $x_i\in \R^d$  such that $|x_i|\leq N$, $i=1,2,3,4$, then
	\begin{eqnarray*}
	|g(s,x_1)-g(s,x_3)-g(t,x_2)+g(t,x_4)|&\leq& L_g|x_1-x_2-x_3+x_4|+ |x_2-x_4|h(s,t)^\beta\\
	&&+ M_N|x_2-x_4|(|x_1-x_2|^{\delta}+|x_3-x_4|^{\delta}).
	\end{eqnarray*}
	(iii) For any $x,y \in \widehat{C}^{q}([t_0,t_1],\R^d)$ such that $x_{t_0} = y_{t_0}$ and $\|x\|_{\infty,[t_0,t_1]}\leq N$, $\|y\|_{\infty,[t_0,t_1]}\leq N$ we have
	\begin{eqnarray}\label{gxy}
	\ltn g(\cdot,x_.)-g(\cdot,y_.)\rtn_{q_0\text{-}\rm{var},[t_0,t_{1}]}&\leq &M'_N\ltn x-y\rtn_{q\text{-}\rm{var},[t_0,t_{1}]} \left(2+\ltn x\rtn^{\delta}_{q\text{-}\rm{var},[t_0,t_{1}]}+\ltn y\rtn^{\delta}_{q\text{-}\rm{var},[t_0,t_{1}]}\right).\notag\\
		\end{eqnarray}
\end{lemma}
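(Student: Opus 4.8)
The plan is to derive all three estimates from the control-function machinery of Lemma~\ref{controled}, after first establishing a single pointwise second-difference bound which is essentially part~(ii) and then feeding it into~(iii). Throughout I will use three elementary facts about controls: if $\bar\omega$ is a control and $\gamma\ge 1$ then $\bar\omega^\gamma$ is again a control (since $a^\gamma+b^\gamma\le (a+b)^\gamma$ for $a,b\ge 0$), the map $(s,t)\mapsto \ltn x\rtn^{r}_{q\text{-var},[s,t]}$ is a control whenever $r\ge q$, and the exponent inequalities \eqref{relation-p-q-alpha}, namely $q_0\ge q$, $q_0\beta>1$ and $q_0\delta\ge q$, which guarantee that all the powers appearing below are legitimate controls.

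For (i), I would split $|g(t,x_t)-g(s,x_s)|\le |g(t,x_t)-g(t,x_s)|+|g(t,x_s)-g(s,x_s)|$ and bound the first term by $L_g|x_t-x_s|\le L_g\ltn x\rtn_{q\text{-var},[s,t]}$ using $(H_g)(i)$ and the second by $h(s,t)^\beta$ using $(H_g)(iii)$. Since $\ltn x\rtn^{q_0}_{q\text{-var},[s,t]}$ and $h(s,t)^{\beta q_0}$ are controls (by $q_0\ge q$ and $q_0\beta>1$), the right-hand side has the form $L_g(\bar\omega^1)^{1/q_0}+(\bar\omega^2)^{1/q_0}$, so Lemma~\ref{controled} with index $q_0$ gives $\ltn g(\cdot,x_\cdot)\rtn_{q_0\text{-var},[t_0,t_1]}\le L_g\ltn x\rtn_{q\text{-var},[t_0,t_1]}+h(t_0,t_1)^\beta$. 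Bounding $L_g\le M$ and $h(t_0,t_1)^\beta\le h(0,T)^\beta\le M$ (by monotonicity of the control $h$ on $[0,T]$) yields \eqref{gx}.

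The heart of the lemma is (ii), and this is the step I expect to be the main obstacle. I would write the mixed difference as $\Delta=[g(s,x_1)-g(s,x_3)]-[g(t,x_2)-g(t,x_4)]$ and represent each bracket by the fundamental theorem of calculus along the segment joining its two arguments: with $\xi_\theta=(1-\theta)x_3+\theta x_1$ and $\eta_\theta=(1-\theta)x_4+\theta x_2$,
$$\Delta=\int_0^1\Big[\partial_x g(s,\xi_\theta)(x_1-x_3)-\partial_x g(t,\eta_\theta)(x_2-x_4)\Big]\,d\theta.$$
The algebraic identity $\partial_x g(s,\xi_\theta)(x_1-x_3)-\partial_x g(t,\eta_\theta)(x_2-x_4)=\partial_x g(s,\xi_\theta)(x_1-x_2-x_3+x_4)+[\partial_x g(s,\xi_\theta)-\partial_x g(t,\eta_\theta)](x_2-x_4)$ splits the integrand into the $L_g$-term (using $|\partial_x g|\le L_g$, which follows from $(H_g)(i)$) and a remainder. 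For the remainder I would further decompose $\partial_x g(s,\xi_\theta)-\partial_x g(t,\eta_\theta)=[\partial_x g(s,\xi_\theta)-\partial_x g(t,\xi_\theta)]+[\partial_x g(t,\xi_\theta)-\partial_x g(t,\eta_\theta)]$, bounding the first bracket by $h(s,t)^\beta$ via $(H_g)(iii)$ and the second by $M_N|\xi_\theta-\eta_\theta|^\delta$ via $(H_g)(ii)$ (here $|\xi_\theta|,|\eta_\theta|\le N$ because they are convex combinations of points of norm $\le N$). The crucial observation is that $\xi_\theta-\eta_\theta=(1-\theta)(x_3-x_4)+\theta(x_1-x_2)$, so subadditivity of $r\mapsto r^\delta$ for $\delta\le 1$ gives $|\xi_\theta-\eta_\theta|^\delta\le |x_3-x_4|^\delta+|x_1-x_2|^\delta$; integrating in $\theta$ then produces exactly the three asserted terms. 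Choosing the segment parametrization so that $\xi_\theta-\eta_\theta$ becomes a convex combination of $x_1-x_2$ and $x_3-x_4$ is what makes the $\delta$-H\"older term collapse to the desired shape.

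Finally (iii) follows by applying (ii) pointwise. For $s<t$ the increment of $z_\cdot:=g(\cdot,x_\cdot)-g(\cdot,y_\cdot)$ equals, up to sign, the expression in (ii) with $(x_1,x_2,x_3,x_4)=(x_s,x_t,y_s,y_t)$, all of norm $\le N$, so writing $w:=x-y$ I obtain $|z_t-z_s|\le L_g|w_t-w_s|+|w_t|h(s,t)^\beta+M_N|w_t|(\ltn x\rtn^\delta_{q\text{-var},[s,t]}+\ltn y\rtn^\delta_{q\text{-var},[s,t]})$. Since $w_{t_0}=x_{t_0}-y_{t_0}=0$, the factor $|w_t|=|w_t-w_{t_0}|\le \ltn w\rtn_{q\text{-var},[t_0,t_1]}$ is a constant in $(s,t)$, and the remaining $(s,t)$-dependent factors $\ltn w\rtn^{q_0}_{q\text{-var},[s,t]}$, $h(s,t)^{\beta q_0}$, $\ltn x\rtn^{\delta q_0}_{q\text{-var},[s,t]}$, $\ltn y\rtn^{\delta q_0}_{q\text{-var},[s,t]}$ are all controls by \eqref{relation-p-q-alpha}. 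Lemma~\ref{controled} with index $q_0$ then yields $\ltn z\rtn_{q_0\text{-var},[t_0,t_1]}\le \ltn w\rtn_{q\text{-var},[t_0,t_1]}\big[L_g+h(0,T)^\beta+M_N(\ltn x\rtn^\delta_{q\text{-var},[t_0,t_1]}+\ltn y\rtn^\delta_{q\text{-var},[t_0,t_1]})\big]$, and absorbing $L_g+h(0,T)^\beta\le 2M\le 2M'_N$ and $M_N\le M'_N$ into the constant $M'_N$ gives \eqref{gxy}.
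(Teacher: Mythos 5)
Your proof is correct, and in substance it follows the same route as the paper, so the comparison is mostly organizational. For (i) and (iii) the paper argues directly on an arbitrary finite partition: it applies the same pointwise bounds you derive ($L_g|x_t-x_s|+h(s,t)^\beta$ for (i); the second-difference bound of (ii) with $\|x-y\|_{\infty,[t_0,t_1]}$ in place of your $|w_t|$ for (iii)), then uses Minkowski's inequality together with superadditivity of $h(\cdot,\cdot)^{q_0\beta}$ and of $\ltn x\rtn^{q_0\delta}_{q\text{-var},[\cdot,\cdot]}$, and finally takes the supremum over partitions. You instead funnel the identical computation through Lemma~\ref{controled}; since the proof of that lemma \emph{is} the partition--Minkowski argument, the two treatments are interchangeable, and your version has the mild advantage of making the exponent conditions of \eqref{relation-p-q-alpha} ($q_0\geq q$, $q_0\beta>1$, $q_0\delta\geq q$) appear explicitly as exactly what is needed for the comparison functions to be controls. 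The one place where you genuinely add content is (ii): the paper does not prove it at all but cites Lemma~7.1 of Nualart--R\u{a}\c{s}canu with $h(s,t)^\beta$ in place of $(t-s)^\beta$, and your argument --- the fundamental theorem of calculus along the two segments, the splitting of the integrand into the $L_g$-term plus $[\partial_x g(s,\xi_\theta)-\partial_x g(t,\eta_\theta)](x_2-x_4)$, the time/space decomposition of the derivative difference, and the observation that $\xi_\theta-\eta_\theta=(1-\theta)(x_3-x_4)+\theta(x_1-x_2)$ combined with subadditivity of $r\mapsto r^\delta$ --- is precisely the proof in that reference. Two small caveats, neither a gap: $(H_g)$(ii)--(iii) are stated for the partial derivatives $\partial_{x_i}g$, so your bounds on the full differential $\partial_x g$ hold up to an absorbed dimensional constant, and the bound $|\partial_x g|\leq L_g$ does follow, as you say, from Lipschitz continuity together with the assumed differentiability.
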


\begin{proof} (i)
For $s<t$ in $[t_0,t_1]$, we have
	\begin{eqnarray*}
		|g(t,x_t)-g(s,x_s)|&\leq & |g(t,x_t)-g(t,x_s)|+|g(t,x_s)-g(s,x_s)|\\
		&\leq & L_g|x_t-x_s| +  h(s,t)^{\beta}.
	\end{eqnarray*}
	Let $\Pi=(s_i)_1^{n+1}$ be an arbitrary finite partition of $[t_0,t_1]$, $s_1=t_0, s_{n+1}=t_1$. Since $q_0\geq q$ and $q_0\beta>1$ we have 
	\begin{eqnarray*}
		\left(\sum_{i=1}^{n}|g(s_{i+1},x_{s_{i+1}})-g({s_i},x_{s_i})|^{q_0}\right)^{1/q_0}&\leq & L_g \left(\sum_{i=1}^{n} |x_{s_{i+1}}-x_{s_i}|^{q_0}\right)^{1/q_0} +  \left(\sum_{i=1}^{n} h(s_{i},s_{i+1})^{q_0\beta}	\right)^{1/q_0}\\
		&\leq & L_g\ltn x\rtn_{q_0\text{-var},[t_0,t_1]}+ h(t_0,t_1)^{\beta}\\
		&\leq & L_g\ltn x\rtn_{q\text{-var},[t_0,t_1]}+ h(t_0,t_1)^{\beta} \\
		&\leq & L_g\ltn x\rtn_{q\text{-var},[t_0,t_1]}+ h(0,T)^{\beta} \\
		&\leq & M(1+\ltn x\rtn_{q\text{-var},[t_0,t_1]})<\infty.
	\end{eqnarray*}
	Take the superemum over the set of all finite partition $\Pi$ we get $g(\cdot,x_.) \in \widehat{C}^{q_0}([t_0,t_1],\R^{d\times m})$ and
	$$
	\ltn g(\cdot,x_.)\rtn_{q_0\text{-var},[t_0,t_1]}\leq M(1+\ltn x\rtn_{q\text{-var},[t_0,t_1]}).
	$$

	(ii) This part is similar to \cite[Lemma 7.1]{nualart3} with our function $h(s,t)^\beta$ playing the role of  $|t-s|^\beta$ in \cite[Lemma 7.1]{nualart3}.\\
	
	(iii) Note that $ q_0\beta>1$ and $ q_0\delta \geq q$ hence
	\begin{eqnarray}
	\ltn g(\cdot,x_.)-g(\cdot,y_.)\rtn_{q_0\text{-var},[t_0,t_{1}]}&:=& \left(\sup_{\Pi([t_0,t_1])} \sum_{i} | g(s_{i+1},x_{s_{i+1}})- g(s_{i+1},y_{s_{i+1}}) -g(s_i,x_{s_{i}}) +g(s_{i},y_{s_{i}}) |^{q_0}\right)^{1/q_0}\notag\\
	&\leq & L_g\sup_{\Pi([t_0,t_1])}\left(\sum_i |x_{s_{i+1}}-y_{s_{i+1}}-x_{s_i}+y_{s_i} |^{q_0}\right)^{1/q_0}+\notag\\
	&&\hspace*{-2cm} +  \|x-y\|_{\infty,[t_0,t_1]}\sup_{\Pi([t_0,t_1])}\left(\sum_i h(s_{i},s_{i+1})^{q_0\beta}\right)^{1/q_0}\notag\\
	&&\hspace*{-2cm}  + M_N \|x-y\|_{\infty,[t_0,t_1]} \sup_{\Pi([t_0,t_1])}\left[ \left(\sum_i |x_{s_{i+1}}-x_{s_i}|^{q_0\delta }\right)^{1/q_0}+\left(\sum_i |y_{s_{i+1}}-y_{s_i}|^{q_0\delta  }\right)^{1/q_0}\right]\notag\\
	&\leq& L_g\ltn x-y\rtn_{q\text{-var},[t_0,t_1]}\notag\\
	&&+ \|x-y\|_{\infty,[t_0,t_1]}\left[   h(t_0,t_1)^{\beta} + M_N \left(\ltn x\rtn^{\delta}_{q\text{-var},[t_0,t_{1}]}+\ltn y\rtn^{\delta}_{q\text{-var},[t_0,t_{1}]}\right)\right]\notag\\
	&\leq & M'_N\ltn x-y\rtn_{q\text{-var},[t_0,t_{1}]} \left(2+\ltn x\rtn^{\delta}_{q\text{-var},[t_0,t_{1}]}+\ltn y\rtn^{\delta}_{q\text{-var},[t_0,t_{1}]}\right).\notag
	\end{eqnarray}
	The lemma is proved.
\end{proof}

For a proof of our main theorem on existence and uniqueness of solutions of an Young  differential equation, we need the following proposition.

\begin{proposition}\label{EstIJ}
	Assume that ${\textbf H}_1-{\textbf H}_3$ are satisfied.  Let $0\leq t_0<t_1\leq T$ be arbitrary, $q$ be chosen as above satisfying \eqref{q} and $F$ be defined by \eqref{eqn.F}. Then for any 
	$x\in \widehat{C}^{q}([t_0,t_1],\R^d)$ we have $F(x)\in \widehat{C}^{q}([t_0,t_1],\R^d)$, thus
	$$
	F: \widehat{C}^{q}([t_0,t_1],\R^d)\longrightarrow  \widehat{C}^{q}([t_0,t_1],\R^d).
	$$
	Moreover, the following statements hold
	
	(i) The $q$-variation of $F(x)$ satisfies
	\begin{eqnarray}
	\ltn F(x)\rtn_{q\text{-}\rm{var},[t_0,t_1]}&\leq& M(K+2)\left(1+\|x\|_{q\text{-}\rm{var},[t_0,t_1]}\right) \left((t_1-t_0)^{\alpha}+\ltn\omega\rtn_{p\text{-}\rm{var},[t_0,t_1]}\right).\label{Fx}
\end{eqnarray}	

(ii) Let $N\geq 0$ be arbitrary but fixed. Suppose that $x,y\in\widehat{C}^{q}([t_{0},t_{1}],\R^d)$ be such that $\|x\|_{\infty,[t_0,t_1]}\leq N$, $\|y\|_{\infty,[t_0,t_1]}\leq N$ and $x_{t_0} = y_{t_0}$, then we have
	\begin{eqnarray}
	\|F(x)-F(y)\|_{q\text{-}\rm{var},[t_0,t_{1}]}&\leq& \|x-y\|_{q\text{-}\rm{var},[t_0,t_{1}]}\left((t_1-t_0)+\ltn\omega\rtn_{p\text{-}\rm{var},[t_0,t_1]}\right)\notag\\
	&& \times M'_N (K+1) \left(2+\ltn x\rtn^{\delta}_{q\text{-}\rm{var},[t_0,t_{1}]}+\ltn y\rtn^{\delta}_{q\text{-}\rm{var},[t_0,t_{1}]}\right).\label{Fxy}
	\end{eqnarray}
\end{proposition}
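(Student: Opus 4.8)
The plan is to split $F(x) = x_{t_0} + I(x) + J(x)$ and estimate the $q$-variation of the Riemann integral part $I$ and the Young integral part $J$ separately, then combine. Throughout I will use the relations collected in \eqref{relation-p-q-alpha}, in particular $q>p$, $q\alpha>1$, and $\tfrac1p+\tfrac1{q_0}>1$, which guarantee that the Young integral $\int g(s,x_s)d\omega_s$ is well defined (via Lemma \ref{lemma1} applied with integrand in $\widehat{C}^{q_0}$) and that both pieces land in $\widehat{C}^q$.

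First I would treat $I(x)_t = \int_{t_0}^t f(s,x_s)\,ds$. Using the growth bound $(H_f)(ii)$, $|f(s,x_s)|\le a|x_s|+b(s)$, I would estimate $|I(x)_t - I(x)_s| \le \int_s^t (a|x_u|+b(u))\,du$ and then bound this increment by a control raised to the power $1/q$: the term $a\int_s^t|x_u|\,du$ is controlled by $a\,\|x\|_{\infty}(t-s)$, while H\"older's inequality with exponent pair $(\tfrac1{1-\alpha},\tfrac1\alpha)$ turns $\int_s^t b(u)\,du$ into $\|b\|_{L_{1/(1-\alpha)}}(t-s)^\alpha$. Since $q\alpha>1$ and $(t-s)\le(t-s)^\alpha(t_1-t_0)^{1-\alpha}$ on a bounded interval, both give increments dominated by a constant times $(t-s)^\alpha$, which is a control to the power $1/q$ because $q\alpha\ge 1$. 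Invoking Lemma \ref{controled} then yields $\ltn I(x)\rtn_{q\text{-var},[t_0,t_1]}\le M(1+\|x\|_{q\text{-var}})(t_1-t_0)^\alpha$ after absorbing $\|x\|_\infty\le\|x\|_{q\text{-var}}$ and the constants $a,\|b\|$ into $M$.

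Next I would treat the Young integral $J(x)_t=\int_{t_0}^t g(s,x_s)\,d\omega_s$. By Lemma \ref{lemma2}(i), $g(\cdot,x_\cdot)\in\widehat{C}^{q_0}$ with $\ltn g(\cdot,x_\cdot)\rtn_{q_0\text{-var}}\le M(1+\ltn x\rtn_{q\text{-var}})$, and $\tfrac1p+\tfrac1{q_0}>1$, so Lemma \ref{lemma1} applies directly to give
\[
\ltn J(x)\rtn_{p\text{-var},[t_0,t_1]}\le \ltn\omega\rtn_{p\text{-var},[t_0,t_1]}\bigl(|g(t_0,x_{t_0})|+(K+1)\ltn g(\cdot,x_\cdot)\rtn_{q_0\text{-var},[t_0,t_1]}\bigr).
\]
Using $|g(t_0,x_{t_0})|\le M(1+|x_{t_0}|)\le M(1+\|x\|_{q\text{-var}})$ from \eqref{g} and the bound on the $q_0$-variation of $g$, the bracket collapses to $M(K+2)(1+\|x\|_{q\text{-var}})$. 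Since $p\le q$, the $p$-variation bounds the $q$-variation, so this controls $\ltn J(x)\rtn_{q\text{-var}}$ by $\ltn\omega\rtn_{p\text{-var}}$ times the same factor. Adding the $I$ and $J$ estimates and combining the factors $(t_1-t_0)^\alpha$ and $\ltn\omega\rtn_{p\text{-var}}$ produces \eqref{Fx}, which simultaneously shows $F(x)\in\widehat{C}^q$.

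For part (ii), the difference $F(x)-F(y)=(I(x)-I(y))+(J(x)-J(y))$ is handled the same way, now using the \emph{local} Lipschitz bound $(H_f)(i)$ for the $I$ difference (giving a factor $L_N(t_1-t_0)$ after the same H\"older step, with the $(t-s)$ increment) and Lemma \ref{lemma1} together with Lemma \ref{lemma2}(iii) for the $J$ difference. The key point is that since $x_{t_0}=y_{t_0}$, the integrand difference $g(\cdot,x_\cdot)-g(\cdot,y_\cdot)$ vanishes at $t_0$, so its $\widehat{C}^{q_0}$-norm equals its $q_0$-variation, to which Lemma \ref{lemma2}(iii) applies and delivers the factor $M'_N\ltn x-y\rtn_{q\text{-var}}(2+\ltn x\rtn_{q\text{-var}}^\delta+\ltn y\rtn_{q\text{-var}}^\delta)$; multiplying by $(K+1)\ltn\omega\rtn_{p\text{-var}}$ and adding the $I$ piece yields \eqref{Fxy}. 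The main obstacle I anticipate is bookkeeping rather than conceptual: I must make sure every constant genuinely fits under $M$ or $M'_N$ as defined in \eqref{M}, and that the Young--Loeve constant $K$ is tracked consistently through both estimates so the final prefactors $M(K+2)$ and $M'_N(K+1)$ come out exactly as stated.
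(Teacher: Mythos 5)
Your proposal is correct and follows essentially the same route as the paper's proof: the same splitting $F(x)=x_{t_0}+I(x)+J(x)$, with Lemma \ref{lemma2}(i) and Lemma \ref{lemma1} giving the $p$-var bound on $J(x)$ (then $p\leq q$), H\"older's inequality plus the control $(s,t)\mapsto(t-s)^{q\alpha}$ handling $I(x)$, and in part (ii) Lemma \ref{lemma2}(iii) together with the vanishing of $g(t_0,x_{t_0})-g(t_0,y_{t_0})$ yielding \eqref{Fxy}. Your only cosmetic deviation is invoking Lemma \ref{controled} where the paper cites Proposition 5.10(i) of Friz--Victoir, which is an interchangeable device for the same step.
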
%
\begin{proof}
(i)	Since $\frac{1}{p}+\frac{1}{q_0} > 1$, by virtue of \eqref{gx}, the Young integral $\int_0^tg(s,x_s)d\omega_s$ exists for all $t\in [t_0,t_1]$. Using \eqref{YL1}, \eqref{eqn.F} and \eqref{g} we get 
	\begin{eqnarray*}
		\ltn J(x)\rtn_{q\text{-var},[t_0,t_1]}&\leq&\ltn J(x)\rtn_{p\text{-var},[t_0,t_1]}\qquad \text {(because $p \leq q$)} \notag\\
		&\leq& \ltn\omega\rtn_{p\text{-var},[t_0,t_1]}\left[|g(t_0,x_{t_0})|+ (K+1) \ltn g(.,x_.)\rtn_{q_0\text{-var},[t_0,t_1]}\right]\notag\\
		&\leq & \ltn\omega\rtn_{p\text{-var},[t_0,t_1]}\left[M(1+|x_{t_0}|)+M(K+1)(1+\ltn x\rtn_{q\text{-var},[t_0,t_1]})\right]\\
		&\leq &\ltn\omega\rtn_{p\text{-var},[t_0,t_1]}M\left[ (K+2) + |x_{t_0}| +(K+1)\ltn x\rtn_{q\text{-var},[t_0,t_1]}  \right].
	\end{eqnarray*}

Now, by H\"older inequality and the assumption ${\textbf H}_2$ we have 
	$$
	\int_s^t |b(u)|du\leq \left (\int_s^t|b(u)|^\frac{1}{1-\alpha} du \right)^{1-\alpha}  \left (\int_s^t 1 du \right)^\alpha\leq \|b\|_{L_{\frac{1}{1-\alpha}}} (t-s)^{\alpha}\leq M(t-s)^{\alpha}.
	$$ 
	Therefore, for $s<t$ in $[t_0,t_1]$ using the assumption ${\textbf H}_2$ we have 
	\begin{eqnarray*}
	\left|\int_s^tf(u,x_u)du\right| &\leq & \int_s^t( a|x_u|+|b(u)|)du\\
	&\leq &  a\|x\|_{\infty,[s,t]}(t-s) + \|b\|_{L_{\frac{1}{1-\alpha}}} (t-s)^{\alpha}\\
	&\leq & (t-s)^{\alpha} \left(aT^{1-\alpha}\|x\|_{\infty,[t_0,t_1]} + \|b\|_{L_{\frac{1}{1-\alpha}}}\right)\\
	&\leq & (t-s)^{\alpha}M \left(1+|x_{t_0}|+\ltn x\rtn_{q\text{-var},[t_0,t_1]}\right).
	\end{eqnarray*}
This implies 
	\begin{eqnarray*}
	\ltn I(x)\rtn_{q\text{-var},[t_0,t_1]}=\ltn\int_{t_0}^.f(u,x_u)du\rtn_{q\text{-var},[t_0,t_1]}\leq M(t_1-t_0)^{\alpha} \left(1+|x_{t_0}|+\ltn x\rtn_{q\text{-var},[t_0,t_1]}\right)
	\end{eqnarray*}
by \cite[Proposition 5.10(i), p. 83] {friz} and the fact that the function $(s,t)\to (t-s)^{q\alpha}$ defined on $\Delta[t_0,t_1]$ is a control function for $q\alpha > 1$.
Since 
$$
\ltn F(x)\rtn_{q\text{-var},[t_0,t_1]}\leq \ltn I(x)\rtn_{q\text{-var},[t_0,t_1]}+\ltn J(x)\rtn_{q\text{-var},[t_0,t_1]}
$$	
\eqref{Fx} holds.

(ii) By virtue of \eqref{YL1}, \eqref{gxy}  and the condition $x_{t_0} =y_{t_0}$ of the Proposition, we have
	\begin{eqnarray*}
	\ltn J(x)-J(y)\rtn_{p\text{-var},[t_0,t_{1}]}&\leq& \ltn\omega\rtn_{p\text{-var},[t_0,t_1]}\left[|g(t_0,x_{t_0}) - g(t_0,y_{t_0})|+ (K+1)\ltn g(.,x_.) - g(.,y_.)\rtn_{q_0\text{-var},[t_0,t_1]}\right]\\
	&\leq& \ltn\omega\rtn_{p\text{-var},[s,t]}(K+1)\ltn g(.,x_.) - g(.,y_.)\rtn_{q_0\text{-var},[t_0,t_1]}\\
	&\leq& (K+1)M'_N\ltn\omega\rtn_{p\text{-var},[t_0,t_1]}\|x-y\|_{q\text{-var},[t_0,t_{1}]} \left(2+\ltn x\rtn^{\delta}_{q\text{-var},[t_0,t_{1}]}+\ltn y\rtn^{\delta}_{q\text{-var},[t_0,t_{1}]}\right).
	\end{eqnarray*}
	Furthermore,
	\begin{eqnarray*}
	\left|[I(x)(t)-I(y)(t)]- [I(x)(s)-I(y)(s)]\right| &\leq &\int_s^t |f(u,x_u)-f(u,y_u)|du \\
	&\leq& L_N \int_s^t |x_u-y_u| du\\
	&\leq & L_N \|x-y\|_{\infty,[t_0,t_1]} (t-s)\\
	&\leq & L_N \|x-y\|_{q\text{-var},[t_0,t_1]} (t-s),
	\end{eqnarray*}
	hence 
	$$
	\ltn I(x)-I(y)\rtn_{q\text{-var},[t_0,t_1]} \leq M'_N \|x-y\|_{q\text{-var},[t_0,t_1]} (t_1-t_0).
	$$
Inequality \eqref{Fxy} is a direct consequence of these estimates for $I(x)$ and $J(x)$.
\end{proof}


Before proving the existence and uniqueness theorem, we need the following lemma of Gronwall type.
\begin{lemma}[Gronwall-type Lemma]\label{lemma6}
Let $1\leq p\leq q$ be arbitrary and satisfy $\frac{1}{p}+\frac{1}{q}>1$.  Assume that $\omega \in \widehat{C}^p([0,T],\R)$ and $y\in \widehat{C}^{q}([0,T],\R^d)$ satisfy
\begin{equation}\label{condition1}
|y_t-y_s|\leq A_{s,t}^{1/q} + a_1\left |\int_s^t y_udu \right | + a_2\left |\int_s^t y_ud\omega_u\right |,\quad\forall s,t\in [0,T],\quad s<t, 
\end{equation}
 for some fixed control function $A$ on $\Delta[0,T]$ and some constants $a_1, a_2 \geq 0$. Then there exists a constant  $C$ independent of $T$ such that for every $s,t \in [0,T]$, $s<t$,
 \begin{eqnarray}\label{eqn.gron}
\ltn y\rtn_{q\text{-}\rm{var},[s,t]} \leq (|y_s| + A_0) e^{C(|t-s|^p+ \ltn\omega\rtn_{p\text{-}\rm{var},[s,t]}^{p})},
\end{eqnarray}
where $A_0 =A_{0,T}^{1/q}$.
\end{lemma}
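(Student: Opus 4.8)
The plan is to linearize the hypothesis \eqref{condition1} with Young--Loeve, localize on a greedy partition so that the self-referential $q$-variation term can be absorbed on each block, and then run a discrete Gronwall recursion whose accumulation is turned into the stated exponential through the counting estimate \eqref{Nab2}.

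\emph{Reduction.} First I would estimate the two integrals on the right of \eqref{condition1}. For the drift, $\big|\int_s^t y_u\,du\big|\le \|y\|_{\infty,[s,t]}(t-s)\le (|y_s|+\ltn y\rtn_{q\text{-var},[s,t]})(t-s)$; for the Young integral, \eqref{YL0} together with $|\omega_t-\omega_s|\le\ltn\omega\rtn_{p\text{-var},[s,t]}$ gives $\big|\int_s^t y_u\,d\omega_u\big|\le |y_s|\ltn\omega\rtn_{p\text{-var},[s,t]}+K\ltn y\rtn_{q\text{-var},[s,t]}\ltn\omega\rtn_{p\text{-var},[s,t]}$. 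Writing $\chi(s,t):=a_1(t-s)+a_2K\ltn\omega\rtn_{p\text{-var},[s,t]}$ and using $K\ge1$ to bound both integral coefficients by $\chi$, \eqref{condition1} becomes
\[
|y_t-y_s|\le A_{s,t}^{1/q}+\chi(s,t)\,|y_s|+\chi(s,t)\,\ltn y\rtn_{q\text{-var},[s,t]},\qquad s<t .
\]
If $a_1=a_2=0$ the last two terms vanish and Lemma \ref{controled} (applied with exponent $q$ and the single control $A$) already yields $\ltn y\rtn_{q\text{-var},[s,t]}\le A_{s,t}^{1/q}\le|y_s|+A_0$, so I may assume $\max\{a_1,a_2K\}>0$.

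\emph{Localization and absorption.} Next I would build, via the construction of Section \ref{subsec.greedy-times} started at $s$ (an arbitrary starting point is admissible), greedy times $s=\sigma_0<\cdots<\sigma_M=t$ with $\lambda=1$ and $\mu:=(4\max\{a_1,a_2K\})^{-1}$, so that $(\sigma_{i+1}-\sigma_i)+\ltn\omega\rtn_{p\text{-var},[\sigma_i,\sigma_{i+1}]}=\mu$, hence $\chi_i:=\chi(\sigma_i,\sigma_{i+1})\le\frac14$ on every block, while \eqref{Nab2} with $p'=p$ gives $M=N(s,t,\omega)\le\frac{2^{p-1}}{\mu^{p}}\big((t-s)^{p}+\ltn\omega\rtn^{p}_{p\text{-var},[s,t]}\big)$. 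On a single block $[\sigma_i,\sigma_{i+1}]$ I would repeat the partition--Minkowski computation from the proof of Lemma \ref{controled} (now for exponent $q$): the $A$-term sums by superadditivity of the control $A$, the $\chi|y_\cdot|$-term is controlled by $\|y\|_{\infty}\,\chi(\sigma_i,\sigma_{i+1})$ after splitting $\chi$ by Minkowski and using that $(s,t)\mapsto(t-s)^q$ and $\ltn\omega\rtn^{p}_{p\text{-var}}$ are controls, and the $\chi\ltn y\rtn_{q\text{-var}}$-term is bounded by $\chi_i\,\ltn y\rtn_{q\text{-var},[\sigma_i,\sigma_{i+1}]}$ via Lemma \ref{additive}. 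The coefficient of $\ltn y\rtn_{q\text{-var},[\sigma_i,\sigma_{i+1}]}$ thereby produced is $\le 2\chi_i\le\frac12<1$, so it may be moved to the left, leaving
\[
\ltn y\rtn_{q\text{-var},[\sigma_i,\sigma_{i+1}]}\le 2A_{\sigma_i,\sigma_{i+1}}^{1/q}+2\chi_i\,|y_{\sigma_i}| .
\]

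\emph{Discrete Gronwall and the exponential.} Finally, with $v_i:=|y_{\sigma_i}|+A_0$ and $|y_{\sigma_{i+1}}|\le|y_{\sigma_i}|+\ltn y\rtn_{q\text{-var},[\sigma_i,\sigma_{i+1}]}$, the block estimate gives $v_{i+1}\le(1+2\chi_i)v_i+2A_{\sigma_i,\sigma_{i+1}}^{1/q}$, which unrolls to $v_i\le e^{2\sum_j\chi_j}\big(v_0+2\sum_j A_{\sigma_j,\sigma_{j+1}}^{1/q}\big)$. Here is the crux, and the step I expect to be the main obstacle: rather than summing $\chi_j$ termwise (which reproduces only the first power $a_1(t-s)$, not $(t-s)^{p}$), I use that each $\chi_j\le\max\{a_1,a_2K\}\,\mu$ is uniformly bounded and that there are only $M$ of them, so by \eqref{Nab2},
\[
\sum_{j}\chi_j\le M\max\{a_1,a_2K\}\,\mu\le C_1\big((t-s)^{p}+\ltn\omega\rtn^{p}_{p\text{-var},[s,t]}\big),
\]
which is exactly the exponent in \eqref{eqn.gron}; the $A$-sum is handled by H\"older and superadditivity of $A$, giving $\sum_j A_{\sigma_j,\sigma_{j+1}}^{1/q}\le M^{1-1/q}A_0$. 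Feeding the resulting uniform bound $V:=\max_i v_i$ back into the block estimate and applying Lemma \ref{additive} once more, $\ltn y\rtn^q_{q\text{-var},[s,t]}\le M^{q-1}\sum_i\ltn y\rtn^q_{q\text{-var},[\sigma_i,\sigma_{i+1}]}$, one collects a bound equal to $(|y_s|+A_0)$ times factors that are polynomial in $M$ and the exponential $e^{2C_1(\cdots)}$. These polynomial-in-$M$ factors and numerical constants are then folded into a single $e^{C(|t-s|^{p}+\ltn\omega\rtn^{p}_{p\text{-var},[s,t]})}$ using $M\le C(\cdots)/\mu^{p}$ together with $x^{a}\le C_a e^{x}$; the only delicate bookkeeping is to check that on short blocks the vanishing of $A_{s,t}$ and of $\chi$ prevents any spurious constant $>1$ from surviving when the exponent is near $0$. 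By contrast, the absorption of the self-referential $q$-variation term is routine once the greedy mesh forces $\chi_i\le\frac14$.
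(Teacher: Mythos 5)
Your proposal is correct and follows essentially the same route as the paper's proof: reduce \eqref{condition1} via the Young--Loeve estimate \eqref{YL0} to a self-referential variation bound, absorb the $\ltn y\rtn_{q\text{-var}}$ term on greedy blocks with $\lambda=1$ and $\mu$ of order $1/\max\{a_1,a_2 K\}$, iterate across blocks, convert the block count \eqref{Nab2} into the exponential, and reassemble with Lemma \ref{additive}. The differences are cosmetic --- your product of factors $(1+2\chi_i)$ replaces the paper's blockwise doubling $2A_0+|y_{t_{i+1}}|\le 2(2A_0+|y_{t_i}|)$, and the prefactor bookkeeping you flag as delicate is no worse than the paper's own, whose derivation likewise ends with $(2A_0+|y_s|)$ rather than the stated $(|y_s|+A_0)$.
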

\begin{proof}
Put 
$$
c:=\max\{a_1, a_2(K+1)\},
$$
in which $K$ is defined in \eqref{constK}.
We have
\begin{eqnarray*}
|y_t-y_s|&\leq & A_{s,t}^{1/q}+a_1\|y\|_{\infty,[s,t]}(t-s) + a_2\ltn \omega\rtn_{p\text{-var},[s,t]}(\|y\|_{\infty,[s,t]} + K\ltn y\rtn_{q\text{-var},[s,t]})\\
&\leq &A_{s,t}^{1/q}+\max \left\{a_1\|y\|_{\infty,[s,t]}, a_2(\|y\|_{\infty,[s,t]}+K\ltn y\rtn_{q\text{-var},[s,t]})\right\} (t-s+\ltn \omega\rtn_{p\text{-var},[s,t]}).
\end{eqnarray*}
Fix the interval $[s,t]\subset [0,T]$ and apply the above inequality for arbitrary subinterval $[u,v]\subset [s,t]$ we obtain
\begin{eqnarray*}
|y_v-y_u|&\leq & A_{u,v}^{1/q}+\max\{a_1\|y\|_{\infty,[s,t]}, a_2(\|y\|_{\infty,[s,t]}+K\ltn y\rtn_{q\text{-var},[s,t]})\} (v-u+\ltn \omega\rtn_{p\text{-var},[u,v]})\\
&\leq &A_{u,v}^{1/q}+\max\{a_1, a_2(K+1)\}(|y_s| +\ltn y\rtn_{q\text{-var},[s,t]}) (v-u+\ltn \omega\rtn_{p\text{-var},[u,v]})\\
&\leq &A_{u,v}^{1/q}+c(|y_s| +\ltn y\rtn_{q\text{-var},[s,t]}) (v-u+\ltn \omega\rtn_{p\text{-var},[u,v]}).
\end{eqnarray*}
 Therefore, by virtue of Lemma \ref{controled}, we get
\begin{eqnarray}
\ltn y\rtn_{q\text{-var},[s,t]}&\leq & A_{s,t}^{1/q}+ c(|y_s| +\ltn y\rtn_{q\text{-var},[s,t]}) (t-s+\ltn \omega\rtn_{p\text{-var},[s,t]})\notag\\
&\leq & A_0+ c(|y_s| +\ltn y\rtn_{q\text{-var},[s,t]}) (t-s+\ltn \omega\rtn_{p\text{-var},[s,t]}).\label{new1}
\end{eqnarray}
Now we construct  the sequence of greedy times $t_i$ with parameter $\{1, \frac{1}{2c}\}$ according to Subsection~\ref{subsec.greedy-times}, that is 
$$
(t_{i+1}-t_i+\ltn \omega\rtn_{p\text{-var},[t_i,t_{i+1}]})=\frac{1}{2c}.
$$
Then, by \eqref{new1} for all $s,t \in [t_i,t_{i+1}]$, $s<t$, we have
$$
\ltn y\rtn_{q\text{-var},[s,t]} \leq A_0 + \frac{1}{2}(|y_s| + \ltn y\rtn_{q\text{-var},[s,t]}),
$$
which implies
\begin{eqnarray*}
\ltn y\rtn_{q\text{-var},[t_i,t_{i+1}]}&\leq & 2A_0+ |y_{t_i}|,\\
\ltn y\rtn_{q\text{-var},[u,v]}&\leq & 2A_0+ |y_{u}|,\quad \forall \;  u,v \in [t_i,t_{i+1}],\; u<v.
\end{eqnarray*}
Therefore,
$$
|y_{t_{i+1}}|\leq \|y\|_{\infty,[t_i,t_{i+1}]}\leq 2(A_0+|y_{t_i}|),
$$
or more generally
\begin{equation}
|y_{t_{i+1}}|\leq  \|y\|_{\infty,[s,t_{i+1}]}\leq 2(A_0+|y_{s}|), \quad \forall s\in [t_i,t_{i+1}].
\end{equation}
By induction we obtain for any $s\in [t_k,t_{k+1}]$, $0\leq k\leq i$, $i\in \{0,\dots , N(T,\omega)\}$,  where $N(T,\omega)$ is defined by \eqref{N}, the sequence of inequalities
\begin{eqnarray}\label{induction}
2A_0+ | y_{t_{i+1}}|\leq  2(2A_0+ |y_{t_i}|)\leq \dots \leq 2^{i-k}(2A_0+|y_{t_{k+1}}|)\leq 2^{i-k+1}(2A_0+|y_{s}|).
\end{eqnarray}
Hence, 
\begin{eqnarray}
\ltn y\rtn_{q\text{-var},[t_i,t_{i+1}]} \leq 2A_0+ | y_{t_{i}}|\leq 2^{i-k}(2A_0+|y_{s}|),\quad \forall s\in [t_k,t_{k+1}],\;0\leq k\leq i.
\end{eqnarray}
Now, we estimate the $q$-var norm of $y$ in an arbitrary but fixed interval $[s,t]\subset[0,T]$. Recall the sequence of greedy time defined in \eqref{stoppingtime}. If there exists $i$ such that $s<t_i<t$, put
\begin{eqnarray}
\overline{N}&:=& \sup\{n: t_n\leq t\},\notag\\
\underline{N}&:=& \inf\{n: t_n\geq s\},\label{N-barN}\\
N&:=&  \overline{N}- \underline{N}=N(s,t,\omega). \notag
\end{eqnarray}
We have $s\leq t_{\underline{N}}<t_{\underline{N}+1} <\cdots < t_{\overline{N}}\leq t$ and
\begin{eqnarray*}
\ltn y\rtn_{q\text{-var},[s,t_{\underline{N}}]}&\leq & 2A_0 + |y_s|,\\
\ltn y\rtn_{q\text{-var},[t_{\underline{N}+i},t_{\underline{N}+i+1}]}&\leq & 2^{i+1}( 2A_0 + |y_{s}|), i=0,\dots , N-1,\\
\ltn y\rtn_{q\text{-var},[t_{\overline{N}},t]}&\leq & 2A_0+|y_{t_{\overline{N}}}| \leq 2^{N}( 2A_0 + |y_{s}|).
\end{eqnarray*}
By Lemma \ref{additive} we have
 \begin{eqnarray}
\ltn y\rtn_{q\text{-var},[s,t]}&\leq& (N+1)^{\frac{q-1}{q}} \left(\ltn y\rtn^q_{q\text{-var},[s,t_{\underline{N}}]}+ \sum_{i=1}^{N-1}\ltn y\rtn^q_{q\text{-var},[t_i,t_{i+1}]} + \ltn y\rtn^q_{q\text{-var},[t_{\overline{N}},t]} \right)^{1/q}\nonumber\\
&\leq & (N+1)^{\frac{q-1}{q}}(2A_0+|y_s|)(\sum_{j=0}^{N} 2^{jq})^{1/q} \notag\\
&\leq &(N+1)(2A_0+|y_s|)2^{N}. \notag
\end{eqnarray} 
In the case $[s,t]\subset [t_i,t_{i+1}]$ with some $i\in \{0, 1,\dots , N(T,\omega)\}$, we already have 
$$
\ltn y\rtn_{q\text{-var},[s,t]}\leq 2A_0 + |y_s|.
$$
To sum up, for any $[s,t]\subset[0,T]$ we have the estimate 
$$
\ltn y\rtn_{q\text{-var},[s,t]}\leq (2A_0+|y_s|)2^{2N}.
$$
Combining with \eqref{Nab2}, we conclude that
\begin{eqnarray*}  
\ltn y\rtn_{q\text{-var},[s,t]} &\leq& (2A_0+|y_s|)2^{4^pc^p(|t-s|^p+\ltn\omega\rtn^p_{p\text{-var},[s,t]})}\\
&\leq & (2A_0+|y_s|) e^{C(|t-s|^p+ \ltn\omega\rtn_{p\text{-}\rm{var},[s,t]}^{p})},
\end{eqnarray*}
where $C=  4^pc^p\ln 2$.
The proof is complete.
\end{proof}
\begin{remark}
\begin{enumerate}
\item Gronwall Lemma is an important tool in the theory of ordinary differential equations, and the theory of Young differential equations as well. Some versions of Gronwall-type lemma can be seen in \cite{nualart3} and \cite{Martina}.
\item The conclusion of Lemma \ref{lemma6} is still true if one replaces $A_0$ by $A^{1/q}_{s,t}$.
\item It can be seen from the proof that in the conditions of Lemma \ref{lemma6} we have
$$\|y\|_{\infty,[0,T]}\leq (2A_0+|y_0|)2^{N(T,\omega)+1}\leq (2A_0+|y_0|)2^{(4cT)^p+1+ (4c\ltn\omega\rtn_{p\text{-var},[s,t]})^p}.$$
\end{enumerate}
\end{remark}
\begin{corollary}\label{coll6}
If in Lemma~\ref{lemma6} we replace the condition \eqref{condition1} by the condition
\begin{equation}\label{condition2}
\ltn y\rtn_{q\text{-}\rm{var},[s,t]}\leq  A^{1/q}_{s,t}+ a_1(|y_s| +\ltn y\rtn_{q\text{-}\rm{var},[s,t]}) (t-s+\ltn \omega\rtn_{p\text{-}\rm{var},[s,t]})
\end{equation}
for all $s<t$ in $[0,T]$, a positive constant $a_1>0$  and $\omega \in \widehat{C}^p([0,T],\R^m)$. Then there exists a constant  $C$ independent of $T$ such that for every $s<t$ in $[0,T]$
\begin{eqnarray}
\ltn y\rtn_{q\text{-}\rm{var},[s,t]} \leq (|y_s| + A^{1/q}_{s,t})e^{C(|t-s|^p+\ltn \omega\rtn_{p\text{-}\rm{var},[s,t]}^{p})}.
\end{eqnarray}
\end{corollary}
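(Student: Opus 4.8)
The plan is to observe that condition \eqref{condition2} is nothing but the intermediate inequality \eqref{new1} that appears in the proof of Lemma~\ref{lemma6}, with the constant $c$ there played by $a_1$ and the global term $A_0=A_{0,T}^{1/q}$ replaced by the local term $A^{1/q}_{s,t}$. Since every step in the proof of Lemma~\ref{lemma6} \emph{after} \eqref{new1} uses only that inequality together with properties of controls and the greedy-times count, the whole argument can be reused almost verbatim. The only genuine point of care is to propagate the local control term $A^{1/q}_{s,t}$ through the induction instead of bounding it by its value on all of $[0,T]$, which is exactly the content of the second item of the Remark following Lemma~\ref{lemma6}.

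First I would fix an arbitrary interval $[s,t]\subset[0,T]$. Since $A$ is a control, superadditivity yields $A_{u,v}\leq A_{s,t}$, hence $A^{1/q}_{u,v}\leq A^{1/q}_{s,t}$, for every subinterval $[u,v]\subset[s,t]$; likewise $v-u+\ltn\omega\rtn_{p\text{-var},[u,v]}\leq (v-u)+\ltn\omega\rtn_{p\text{-var},[t_i,t_{i+1}]}$ whenever $[u,v]\subset[t_i,t_{i+1}]$. Consequently, on $[s,t]$ condition \eqref{condition2} supplies for all subintervals $[u,v]$ the estimate
$$
\ltn y\rtn_{q\text{-var},[u,v]}\leq A^{1/q}_{s,t}+ a_1\bigl(|y_u| +\ltn y\rtn_{q\text{-var},[u,v]}\bigr)\bigl(v-u+\ltn \omega\rtn_{p\text{-var},[u,v]}\bigr),
$$
which has exactly the shape of \eqref{new1}.

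Next I would run the greedy-times construction of Subsection~\ref{subsec.greedy-times} with parameters $\{1,\tfrac{1}{2a_1}\}$, so that $t_{i+1}-t_i+\ltn\omega\rtn_{p\text{-var},[t_i,t_{i+1}]}=\tfrac{1}{2a_1}$. On each block the coefficient $a_1\bigl(v-u+\ltn\omega\rtn_{p\text{-var},[u,v]}\bigr)\leq\tfrac12$ absorbs the $\ltn y\rtn_{q\text{-var}}$ term, giving $\ltn y\rtn_{q\text{-var},[u,v]}\leq 2A^{1/q}_{s,t}+|y_u|$ and therefore the recursion $|y_{t_{i+1}}|\leq 2\bigl(A^{1/q}_{s,t}+|y_{t_i}|\bigr)$. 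The induction \eqref{induction}, the summation over consecutive blocks via Lemma~\ref{additive}, and the count of greedy times \eqref{Nab2} (applied with $p'=p$, legitimate since $\lambda=1$) then transfer word for word to produce
$$
\ltn y\rtn_{q\text{-var},[s,t]}\leq \bigl(2A^{1/q}_{s,t}+|y_s|\bigr)2^{2N(s,t,\omega)}\leq \bigl(|y_s|+A^{1/q}_{s,t}\bigr)e^{C(|t-s|^p+\ltn\omega\rtn^p_{p\text{-var},[s,t]})},
$$
with $C=4^p a_1^p\ln 2$, using $4^p=2^{2p}$ as in the proof of Lemma~\ref{lemma6}.

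I do not expect a real obstacle: the corollary is a reduction to an inequality already established mid-proof, not a fresh argument. The single delicate point is the bookkeeping that lets one keep the local $A^{1/q}_{s,t}$ rather than the global $A_0$ throughout, and this is justified by the monotonicity of the control $A$ noted above.
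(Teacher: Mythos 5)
Your proposal is correct and coincides with the paper's intended argument: the paper gives no separate proof of Corollary~\ref{coll6} precisely because \eqref{condition2} is the inequality \eqref{new1} from which the proof of Lemma~\ref{lemma6} proceeds, so the greedy-times construction with parameters $\{1,\tfrac{1}{2a_1}\}$, the induction, Lemma~\ref{additive}, and the bound \eqref{Nab2} apply verbatim, and your use of the monotonicity of the control $A$ to keep the local term $A^{1/q}_{s,t}$ is exactly the content of item 2 of the remark following that lemma. The residual factor~$2$ in $(2A^{1/q}_{s,t}+|y_s|)2^{2N}$ versus the stated $(|y_s|+A^{1/q}_{s,t})e^{C(\cdot)}$ is inherited from the paper's own proof of Lemma~\ref{lemma6}, so your write-up is faithful to the source.
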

We are now at the position to state and prove the main theorem of this section.
\begin{theorem}[Existence and uniqueness of global solution]\label{theo2}
Consider the Young differential equation \eqref{integral.eq.Stieltes}, starting from an arbitrary initial time $t_0\in [0,T)$,
$$
	x_t=x_{t_0} +\int_{t_0}^t f(s,x_s) ds + \int_{t_0}^tg(s,x_s)d\omega_s,\quad t\in [t_0,T],\quad x_{t_0}\in\R^d.
	$$
	 with $T$ being an arbitrary fixed positive number and $x_0\in \R^d$ being an arbitrary initial condition. Assume that the conditions ${\textbf H}_1-{\textbf H}_3$ hold. Then, this equation has a unique solution $x$  in the space $\widehat{C}^{q}([t_0,T],\R^d)$, where $q$ is chosen as above satisfying \eqref{q}. Moreover, the solution is in $\widehat{C}^{p'}([t_0,T],\R^d)$, where $p'=\max\{p,\frac{1}{\alpha}\}$.  
\end{theorem}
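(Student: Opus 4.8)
The plan is to localise the fixed point problem $x=F(x)$ via the greedy times of Subsection~\ref{subsec.greedy-times}, to solve it on each greedy subinterval by the Schauder--Tychonoff theorem, and then to concatenate the pieces into a global solution, using the Gronwall estimate of Lemma~\ref{lemma6} for the a priori control needed for the regularity claim and Proposition~\ref{EstIJ}(ii) for uniqueness. Concretely, I would fix the greedy parameters $\lambda=\alpha$ and choose $\mu>0$ so small that $M(K+2)\mu\le\tfrac12$, and let $\{\tau_i\}$ be the associated greedy times on $[t_0,T]$, so that $(\tau_{i+1}-\tau_i)^{\alpha}+\ltn\omega\rtn_{p\text{-var},[\tau_i,\tau_{i+1}]}=\mu$ on each interval. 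The decisive feature is that $\mu$ depends only on the global constants $M$ and $K$, not on the (possibly large) initial datum, while the number of greedy intervals $N(T,\omega)$ is finite by \eqref{mu2}; this is precisely what makes a finite concatenation possible.

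For local existence on a single interval $[\tau_i,\tau_{i+1}]$ with prescribed left endpoint value $\xi$, I would consider the closed convex set $B_R=\{x\in\widehat{C}^{q}([\tau_i,\tau_{i+1}],\R^d):\,x_{\tau_i}=\xi,\ \ltn x\rtn_{q\text{-var},[\tau_i,\tau_{i+1}]}\le R\}$. Since $F(x)_{\tau_i}=\xi$, Proposition~\ref{EstIJ}(i) gives $\ltn F(x)\rtn_{q\text{-var}}\le M(K+2)\mu\,(1+|\xi|+R)$, so that the choice $R=2(1+|\xi|)$ together with $M(K+2)\mu\le\tfrac12$ yields $F(B_R)\subseteq B_R$. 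As bounded subsets of $\widehat{C}^{q}$ with a fixed starting point are relatively compact in $\widehat{C}^{q'}$ for any $q'>q$ (still with $\tfrac1p+\tfrac1{q'}>1$), the set $B_R$ is compact in the $q'$-variation topology, in which $F$ is continuous by the stability of the Young integral under convergence of uniformly $q$-variation-bounded sequences (Subsection~\ref{subsec.frac.int}). The Schauder--Tychonoff theorem then furnishes a fixed point, i.e.\ a solution on $[\tau_i,\tau_{i+1}]$ taking the value $\xi$ at $\tau_i$.

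Starting from $x_{t_0}$ and solving successively on $[\tau_0,\tau_1],[\tau_1,\tau_2],\dots$, each local solution yields a finite terminal value that serves as the initial datum for the next interval; after the finitely many steps $i=0,\dots,N(T,\omega)$ (including the last truncated interval $[\tau_{N},T]$) Remark~\ref{truncated}(ii) and Lemma~\ref{additive} glue the pieces into a solution $x\in\widehat{C}^{q}([t_0,T],\R^d)$. For uniqueness, let $x,y$ be two solutions; both are bounded, say by $N$, on $[t_0,T]$. Applying Proposition~\ref{EstIJ}(ii) on a sufficiently fine greedy partition — fine enough that, by continuity of $(s,t)\mapsto\ltn x\rtn_{q\text{-var},[s,t]}$ on $\Delta[t_0,T]$, the factor $M'_N(K+1)\big((\tau_{i+1}-\tau_i)+\ltn\omega\rtn_{p\text{-var}}\big)\big(2+\ltn x\rtn^{\delta}_{q\text{-var}}+\ltn y\rtn^{\delta}_{q\text{-var}}\big)$ is strictly less than $1$ on each subinterval — and using $x=F(x)$, $y=F(y)$ together with $x_{\tau_i}=y_{\tau_i}$, forces $\|x-y\|_{q\text{-var},[\tau_i,\tau_{i+1}]}=0$; induction from $x_{t_0}=y_{t_0}$ then gives $x\equiv y$ on $[t_0,T]$.

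The improved regularity follows from the equation itself: writing $x_t-x_s=\int_s^t f(r,x_r)\,dr+\int_s^t g(r,x_r)\,d\omega_r$, the drift is $\alpha$-H\"older by the estimate for $I(x)$ in the proof of Proposition~\ref{EstIJ}, while the Young integral obeys $|\int_s^t g\,d\omega|\le C\,\ltn\omega\rtn_{p\text{-var},[s,t]}$ through \eqref{YL0} and Lemma~\ref{lemma2}(i), the constants being finite because the a priori $q$-variation bound is supplied by Lemma~\ref{lemma6}. Hence $|x_t-x_s|\le C_1\big((t-s)^{\alpha p'}\big)^{1/p'}+C_2\big(\ltn\omega\rtn^{p'}_{p\text{-var},[s,t]}\big)^{1/p'}$, where both $(t-s)^{\alpha p'}$ (as $\alpha p'\ge1$) and $\ltn\omega\rtn^{p'}_{p\text{-var}}$ (as $p'\ge p$) are control functions, so Lemma~\ref{controled} gives $x\in\widehat{C}^{p'}([t_0,T],\R^d)$ with $p'=\max\{p,\tfrac1\alpha\}$. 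The step I expect to be the main obstacle is the Schauder--Tychonoff application: verifying in the nonseparable $p$-variation setting that $B_R$ is genuinely compact in the weaker $q'$-topology and that $F$ is continuous there, since Proposition~\ref{EstIJ}(ii) only delivers Lipschitz continuity in the stronger $q$-variation norm and must be supplemented by the stability properties of the Young integral.
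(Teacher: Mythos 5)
Your overall architecture (greedy times with $\lambda=\alpha$ and $\mu$ of order $1/(M(K+2))$, an invariant ball with left endpoint pinned, Schauder--Tychonoff, concatenation over the finitely many greedy intervals, and the control-function argument via Lemma~\ref{controled} for the $p'$-regularity) matches the paper's proof, and your uniqueness argument --- iterating Proposition~\ref{EstIJ}(ii) on a partition refined until the contraction factor drops below $1$ --- is a legitimate variant of the paper's route through Corollary~\ref{coll6}. However, the step you yourself flag as the main obstacle contains a genuine gap: the claim that bounded subsets of $\widehat{C}^{q}$ with a fixed starting point are relatively compact in $\widehat{C}^{q'}$ for $q'>q$ is false. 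Boundedness in $q$-variation gives no uniform modulus of continuity: take $\xi=0$ and $x^n$ piecewise linear, equal to $0$ on $[\tau_i,c]$, rising to $1$ on $[c,c+1/n]$, then constant. Then $\ltn x^n\rtn_{q\text{-var}}=1$ for every $n$ and every $q\geq 1$, so $x^n\in B_R$, yet the pointwise limit is discontinuous and no subsequence converges uniformly; since the sup norm is dominated by the $q'$-variation norm, no subsequence converges in $\widehat{C}^{q'}$ either. So $B_R$ is not compact in your weaker topology, and the Schauder--Tychonoff application as you set it up collapses.

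The repair is to reverse the direction of the embedding, which is exactly what the paper does: instead of seeking compactness of the \emph{domain} in a weaker ($q'>q$) topology, one shows that the \emph{image} $F(B_R)$ is relatively compact in $\widehat{C}^{q}$ itself, because $F$ smooths. Concretely, Proposition~\ref{EstIJ}(i) applied on subintervals gives, for every $x\in B_R$ and $s<t$,
\begin{equation*}
|F(x)_t-F(x)_s|\;\leq\; C\left((t-s)^{\alpha}+\ltn\omega\rtn_{p\text{-var},[s,t]}\right),
\end{equation*}
with $C$ uniform over $B_R$; since $(s,t)\mapsto\ltn\omega\rtn_{p\text{-var},[s,t]}$ is continuous and vanishes on the diagonal, $F(B_R)$ is equicontinuous, and by Lemma~\ref{controled} (using $q\alpha>1$, $q>p$) it is bounded in $(q-\epsilon)$-variation for small $\epsilon>0$ with $q-\epsilon\geq p$ and $(q-\epsilon)\alpha\geq 1$. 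Arzel\`a--Ascoli combined with the interpolation result \cite[Proposition 5.28]{friz} (uniform convergence plus a uniform $(q-\epsilon)$-variation bound implies convergence in $q$-variation) then yields relative compactness of $F(B_R)$ in $\widehat{C}^{q}([\tau_i,\tau_{i+1}],\R^d)$, so $F$ is a compact operator on the closed convex set $B_R$ and Schauder--Tychonoff applies in the $q$-variation topology, where continuity of $F$ is already supplied by \eqref{Fxy}. With this substitution your existence step, and hence the whole proposal, goes through; your regularity paragraph is fine and is essentially the paper's concluding observation that the solution lies in $\widehat{C}^{q-\epsilon}$ on each greedy interval, hence in $\widehat{C}^{p'}([t_0,T],\R^d)$.
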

\begin{proof}	
The proof proceeds in several steps.\\
{\bf {Step 1}}: In this step we will show the {\em local existence and uniqueness} of solution.
Set
	\begin{eqnarray}
	\mu := \dfrac{1}{2M(K+2)},\label{constmu}
	\end{eqnarray}
	where $M$ is defined in \eqref{M} and $K$ is defined in \eqref{constK}.
	Let $s_0\in [t_0,T)$ be arbitrary but fixed.
	We recall here the sequence of greedy times $\tau_n$ with the parameters $\alpha,\mu$, i.e 
      	\begin{equation*}
\tau_0 = 0,\;\;|\tau_{i+1}-\tau_i|^\alpha+\ltn\omega\rtn_{p\text{-var},[\tau_i,\tau_{i+1}]} = \mu.
\end{equation*}
Put $r_0= \min\{n: \tau_n> s_0\}$	and  define $s_1 = \min\{\tau_{r_0}, T\}$. Then, 
\begin{eqnarray}\label{constt1}
	|s_{1}-s_0|^{\alpha}+\ltn \omega \rtn_{p\text{-var},[s_0,s_{1}]} \leq \mu. 
\end{eqnarray}
	We will show that the equation \eqref{integral.eq.Stieltes} restricted to $[s_0,s_1]$,
	$$
	x_t=x_{s_0} +\int_{s_0}^t f(s,x_s) ds + \int_{s_0}^tg(s,x_s)d\omega_s,\quad t\in [s_0,s_1],\quad x_{s_0}\in\R^d,
	$$
has a a unique solution. \medskip

	{\bf{Existence of local solutions}}.\medskip
	
	Recall the mapping $F$ defined by the formula \eqref{eqn.F} with $t_0, t_1$ replaced by $s_0,s_1$, respectively. 	By Proposition \ref{EstIJ} and \eqref{constmu}--\eqref{constt1}, for $s_0, s_1$  determined above we have $F: \widehat{C}^{q}([s_0,s_1],\R^d)\longrightarrow  \widehat{C}^{q}([s_0,s_1],\R^d)$ and 
	\begin{eqnarray*}
	\|F(x)\|_{q\text{-var},[s_0,s_1]}= |F(x)_{s_0}|+\ltn F(x)\rtn_{q\text{-var},[s_0,s_1]}&\leq&|x_{s_0}|+\frac{1}{2}\left(1+\|x\|_{q\text{-var},[s_0,s_1]}\right).
	\end{eqnarray*}
	We show, furthermore, that if $x\in \widehat{C}^{q}([s_0,s_1],\R^d)$ then $F(x)\in C^{(q-\epsilon)\text{-var}}([s_0,s_1],\R^d)$ with small enough $\epsilon$. 
	Indeed, since $q>p$, $q\alpha>1$, we can choose $\epsilon > 0$ such that $q-\epsilon\geq p$ and $(q-\epsilon)\alpha \geq 1$. For all $s<t$ in $[s_0,s_1]$, using \eqref{Fx} we have 
	\begin{eqnarray*}
	|F(x)_t-F(x)_s|&\leq& \ltn F(x)\rtn_{q\text{-var},[s,t]}\\
	&\leq & M(K+2)\left(1+\|x\|_{q\text{-var},[s_0,s_1]}\right) \left((t-s)^{\alpha}+\ltn\omega\rtn_{p\text{-var},[s,t]}\right)\\
	&\leq &M(K+2)\left(1+\|x\|_{q\text{-var},[s_0,s_1]}\right)\left[\left((t-s)^{(q-\epsilon)\alpha}\right)^{\frac{1}{q-\epsilon}}+\left(\ltn \omega\rtn_{p\text{-var},[s,t]}^{(q-\epsilon)}\right)^{\frac{1}{q-\epsilon}}\right],
		\end{eqnarray*}
		then
		\begin{eqnarray*}
		\ltn F(x)\rtn_{(q-\epsilon)-\text{var},[s_0,s_1]}\leq  M(K+2)\left(1+\|x\|_{q\text{-var},[s_0,s_1]}\right) \left((s_1-s_0)^{\alpha}+\ltn\omega\rtn_{p\text{-var},[s_0,s_1]}\right)\\
		\end{eqnarray*}
		and the assertion follows by an application of 
 Lemma \ref{controled}.
	Now, looking at the mapping $F$ again, we introduce the set
$$
	B_1:= \left\{x\in \widehat{C}^{q}([s_0,s_1],\R^d)|\;x(s_0) = x_{s_0}, \|x\|_{q\text{-var},[s_0,s_1]}\leq 2|x_{s_0}|+1 \right\}.
$$
	 Taking into account \eqref{Fxy}, the map $F$ is continuous and  
	 $$	 
	 F:B_1 \to B_1.	 
	 $$
	We show that $B_1$ is a closed convex set in the Banach space $\widehat{C}^{q}([s_0,s_1],\R^d)$,  and $F$ is a compact operator on $B_1$. Indeed, for the former observation, note that if $z=\lambda x+(1-\lambda)y$ for some $x,y\in B_1, \lambda\in [0,1]$ then
	$$
z_{s_0} = \lambda x_{s_0}+(1-\lambda)y_{s_0}= \lambda x_{s_0}+(1-\lambda)x_{s_0} = x_{s_0}
$$	
and 
$$
\|z\|_{q\text{-var},[s_0,s_1]} = \|\lambda x+(1-\lambda)y\|_{q\text{-var},[s_0,s_1]}\leq \lambda\|x\|_{q\text{-var},[s_0,s_1]}+(1-\lambda)\|y\|_{q\text{-var},[s_0,s_1]} \leq 2|x_{s_0}|+1.
$$
Now,  we prove that for any sequence $y^n\in F(B_1)$, there exists an subsequence converges in $p\text{-var}$ norm to an element $y\in B_1$, i.e. $F(B_1)$ is relatively compact in $B_1$. To do that, we will show that $(y^n)$ are   equicontinuous, bounded in $(q-\epsilon)\text{-var}$ norm. Namely, take the sequence $y^n=F(x^n)\in F(S)$, $x^n\in B_1$. Then, by virtue of Lemma \ref{controled} we have
$$
\sup_n \|y^n\|_{(q-\epsilon)\text{-var},[s_0,s_1]}\leq |x_{s_0}|+ 2M(K+2)(1+|x_{s_0}|)((s_1-s_0)^{\alpha}+\ltn\omega\rtn_{p\text{-var},[s_0,s_1]}).
$$
It means that $y^n$ are bounded in $C([s_0,s_1],\R^d)$ with sup norm, as well as bounded in $C^{(q-\epsilon)\text{-var}}([s_0,s_1],\R^d)$.\\
Moreover, for all $n$, $s_0\leq s\leq t\leq s_1$,
 $$
 |y^n_t-y^n_s|\leq 2M(K+2)(1+|x_{s_0}|)\left((t-s)^{\alpha}+\ltn\omega\rtn_{p\text{-var},[s,t]}\right),
 $$
which implies that $(y^n)$ is equicontinuous. Applying Proposition 5.28 of \cite{friz}, we conclude that $y^n$ converges to some $y$ along a subsequence in $\widehat{C}^{q}([s_0,s_1],\R^d)$. This proves the compactness of $\overline{F(B_1)}$. Hence, $F(B_1)$ is a relative compact set in $\widehat{C}^{q}([s_0,s_1],\R^d)$. 
We conclude that $F$ is a compact operator from $B_1$ into itself. 
Therefore, by the Schauder-Tychonoff fixed point  theorem (see e.g \cite[Theorem 2.A, p. 56]{Zeidler}), there exists a function $\hat{x}\in B_1$ such that $F(\hat{x})=\hat{x}$, thus there exists a solution  $\hat{x}\in B_1$ of \eqref{integral.eq.Stieltes} on the interval $[s_0,s_1]$.
\medskip\\
	{\bf {Uniqueness of local solutions}.}
	\medskip\\
		Now, we assume that $x,y$ are two solutions in $\widehat{C}^{q}([s_0,s_1],\R^d)$ of the equation \eqref{integral.eq.Stieltes}  such that $x_{s_0}=y_{s_0}$. It follows that
	$F(x)=x$ and $F(y)=y$. Put
	$$
	N_0 = \max\{\|x\|_{q\text{-var},[s_0,s_1]},\|y\|_{q\text{-var},[s_0,s_1]}\},
	$$ 
	and $z=x-y$, we have $z_{s_0} = 0$ and
	$$\|x\|_{\infty,[s_0,s_1]},\|y\|_{\infty,[s_0,s_1]}\leq N_0.$$
		By virtue of Proposition \ref{EstIJ}(ii), we obtain
	\begin{eqnarray}
	\ltn z\rtn_{q\text{-var},[s,t]}&=&\ltn x-y\rtn_{q\text{-var},[s,t]}= \ltn Fx-Fy\rtn_{q\text{-var},[s,t]}\\
	&\leq& M'_{N_0}(K+1) (1+ 2N_0^\delta) \left(|z_s|+ \ltn z\rtn_{q\text{-var},[s,t]}\right)(t-s+\ltn\omega\rtn_{p\text{-var},[s,t]}).\notag
	\end{eqnarray}
Applying Corollary \ref{coll6} to the function $z$, since $z_{s_0} = 0$ we conclude that $\ltn z\rtn_{q\text{-var},[s_0,s_1]} =0$. That  implies $z\equiv 0$ on $[s_0,s_1]$. The uniqueness of the local solution is proved.

{\bf{Step 2}}: Next, by virtue of the additivity of the Riemann and Young integrals, the solution can be concatenated. Namely,  let $0<t_1<t_2<t_3\leq T$. Let $x_t$ be a solution of the equation
$$
  x_t = x_{t_1} + \int_{t_1}^t f(s,x_s) ds +  \int_{t_1}^t g(s,x_s) d\omega_s, \quad t\in [t_1,t_2],
$$
  and $y_t$ be a solution of the equation
  $$
   y_t = y_{t_2} + \int_{t_2}^t f(s,y_s) ds + \ \int_{t_1}^t g(s,y_s) d\omega_s, \quad t\in [t_2,t_3], 
      $$
 and $y(t_2)=x(t_2)$. Define a continuous function $z(\cdot) : [t_1,t_3] \rightarrow\R^d$ by setting $z(t)=x(t)$ on $[t_1,t_2]$ and $z(t)=y(t)$ on $[t_2,t_3]$. Then $z(\cdot)$ is the solution of the Young differential equation
 $$
  z_t = x_1 + \int_{t_1}^t f(s,z_s) ds +  \int_{t_1}^t g(s,z_s) d\omega_s, \quad t\in [t_1,t_3].
 $$
 Conversely, If $z_t$ is a solution on $[t_1,t_3]$ then its restrictions on $[t_1,t_2]$ and on $[t_2,t_3]$ are solutions of the corresponding equation with the corresponding initial values.

{\bf Step 3}: Finally, apply the estimates \eqref{Nab2} to the case of $\mu$ being defined by \eqref{constmu}, we can easily get the unique global solutions to the equation \eqref{integral.eq.Stieltes} on $[t_0,T]$.

Put $n_0= \min\{n: \tau_n> t_0 \}$.	The interval $[t_0,T]$ can be covered by $N(T,\omega)-n_0+1$ intervals $[t_i,t_{i+1}]$, $i=\overline{0,N(T,\omega)-n_0+1}$, determined by greedy times $t_i=\tau_{n_0+i-1}$, $i=1,\ldots, N(T,\omega)-n_0$,  with parameter $\mu$ being defined by \eqref{constmu} and $t_{N(T,\omega)+1} :=T$. The arguments in {\bf{Step 1}} are applicable to each of intervals $[t_i,t_{i+1}]$, $i=\overline{0,N(T,\omega)}$, implying the existence and uniqueness of solutions on those intervals. Then, starting at $x(t_0) = x_{t_0}$ the unique solution of \eqref{integral.eq.Stieltes} on $[t_0,t_1]$ is extended uniquely to $[t_1,t_2]$, then further by induction up to  $[t_{N(T,\omega)-1},t_{N(T,\omega)}]$ and lastly to $[t_{N(T,\omega)},T]$. The solution $x$ of \eqref{integral.eq.Stieltes} on $[t_0,T]$ then exists uniquely.
	
	Furthermore, for all $\epsilon$ such that $q-\epsilon\geq p'$ the solution $x$ belongs to $\widehat{C}^{q-\epsilon}([t_i,t_{i+1}],\R^d)$, for all $i=\overline{0,N(T,\omega)}$. Hence, 
	$x\in \widehat{C}^{p'}([t_0,T],\R^d)$.
\end{proof}


\begin{proposition}\label{growth}
Assume that the conditions ${\textbf H}_1-{\textbf H}_3$ are satisfied. Let $0\leq t_0<T$. Denote by $x(\cdot)=x(t_0,\cdot,\omega,x_{0})$ the solution of the equation \eqref{integral.eq.Stieltes} on $[t_0,T]$. Then  there exist positive constants $C_1=C_1(T)$, $C_2=C_2(T)$ such that
\begin{eqnarray}\label{EstSolution0}
\| x\|_{q\text{-}\rm{var},[t_0,T]}\leq C_1[1+(T-t_0)^\alpha] (1+|x_0|)(1+\ltn\omega\rtn_{p\text{-}\rm{var},[t_0,T]})e^{C_2\ltn\omega\rtn^{p'}_{p\text{-}\rm{var},[t_0,T]}},
\end{eqnarray}
where $p'=\max\{p,\frac{1}{\alpha}\}$.
\end{proposition}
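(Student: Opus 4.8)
The solution $x$ is already known to exist, to be unique, and to lie in $\widehat{C}^{q}([t_0,T],\R^d)$ by Theorem~\ref{theo2}; in particular $\ltn x\rtn_{q\text{-var},[s,t]}$ is finite on every subinterval and the identity $x=F(x)$ holds there, so I am free to move $q$-variation terms across inequalities. The plan is to run the greedy-times Gronwall mechanism used in Lemma~\ref{lemma6} and in Step~1 of Theorem~\ref{theo2}. I would first fix the greedy times $\{\tau_i\}$ starting at $\tau_0=t_0$ with parameter $\lambda=\alpha$ and $\mu=\frac{1}{2M(K+2)}$ as in \eqref{constmu}, so that $(\tau_{i+1}-\tau_i)^\alpha+\ltn\omega\rtn_{p\text{-var},[\tau_i,\tau_{i+1}]}=\mu$ on each interval. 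Applying Proposition~\ref{EstIJ}(i) to $x=F(x)$ on $[\tau_i,\tau_{i+1}]$ and using the choice of $\mu$ gives $\ltn x\rtn_{q\text{-var},[\tau_i,\tau_{i+1}]}\leq\tfrac12\bigl(1+|x_{\tau_i}|+\ltn x\rtn_{q\text{-var},[\tau_i,\tau_{i+1}]}\bigr)$, hence $\ltn x\rtn_{q\text{-var},[\tau_i,\tau_{i+1}]}\leq 1+|x_{\tau_i}|$.

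From this per-interval bound I obtain $|x_{\tau_{i+1}}|\leq |x_{\tau_i}|+\ltn x\rtn_{q\text{-var},[\tau_i,\tau_{i+1}]}\leq 1+2|x_{\tau_i}|$, so $1+|x_{\tau_{i+1}}|\leq 2(1+|x_{\tau_i}|)$ and therefore $1+|x_{\tau_i}|\leq 2^{i}(1+|x_{t_0}|)$ by induction, exactly as in \eqref{induction}. Writing $\nu$ for the number of greedy intervals covering $[t_0,T]$ (so $\nu\leq N(t_0,T,\omega)+1$) and assembling the pieces with Lemma~\ref{additive} gives $\ltn x\rtn^{q}_{q\text{-var},[t_0,T]}\leq \nu^{q-1}\sum_{i}(1+|x_{\tau_i}|)^{q}\leq \nu^{q-1}(1+|x_{t_0}|)^{q}\sum_{i=0}^{\nu-1}2^{iq}$, and hence $\ltn x\rtn_{q\text{-var},[t_0,T]}\leq C\,\nu^{(q-1)/q}(1+|x_{t_0}|)\,2^{\nu}$ for an absolute constant $C$.

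It then remains to feed in the counting estimate. Because the greedy parameter is $\lambda=\alpha$, I would apply \eqref{Nab2} with the admissible exponent $p'=\max\{p,\frac1\alpha\}$, obtaining $N(t_0,T,\omega)\leq \frac{2^{p'-1}}{\mu^{p'}}\bigl((T-t_0)^{p'\alpha}+\ltn\omega\rtn^{p'}_{p\text{-var},[t_0,T]}\bigr)$, so that $2^{\nu}\leq 2\,e^{C_2(T-t_0)^{p'\alpha}}e^{C_2\ltn\omega\rtn^{p'}_{p\text{-var},[t_0,T]}}$ with $C_2=\frac{2^{p'-1}\ln 2}{\mu^{p'}}$. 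Since $T$ is fixed and $t_0\geq 0$, the factor $e^{C_2(T-t_0)^{p'\alpha}}\leq e^{C_2T^{p'\alpha}}$ is absorbed into $C_1(T)$, and the polynomial prefactor $\nu^{(q-1)/q}\leq \nu\leq C(T)\bigl(1+\ltn\omega\rtn^{p'}_{p\text{-var},[t_0,T]}\bigr)$ is absorbed into the exponential via $1+z\leq C_\eta e^{\eta z}$ with $z=\ltn\omega\rtn^{p'}_{p\text{-var}}$, at the expense of enlarging $C_2$. This already yields $\ltn x\rtn_{q\text{-var},[t_0,T]}\leq C_1(1+|x_0|)e^{C_2\ltn\omega\rtn^{p'}_{p\text{-var},[t_0,T]}}$; since the two prefactors $1+(T-t_0)^\alpha$ and $1+\ltn\omega\rtn_{p\text{-var},[t_0,T]}$ in \eqref{EstSolution0} are both $\geq 1$, the stated (weaker) inequality follows at once, and adding $|x_0|=|x_{t_0}|$ accounts for the full norm $\|x\|_{q\text{-var},[t_0,T]}$.

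I expect the genuine difficulty to be the non-Lipschitz dependence of the drift on time, i.e.\ the exponent $\alpha<1$ in the factor $(t-s)^\alpha$ produced by the $f$-term. This is what blocks a direct appeal to Corollary~\ref{coll6}, whose hypothesis is linear in $t-s$, and forces the greedy construction to use parameter $\lambda=\alpha$; in turn the admissibility constraint $p'\geq \frac1\lambda$ in \eqref{Nab2} is exactly what promotes the exponent in the final estimate from $p$ to $p'=\max\{p,\frac1\alpha\}$. The remaining work is the routine bookkeeping of showing that the polynomial-in-$N$ losses coming from Lemma~\ref{additive} and from \eqref{Nab2} are swallowed by the exponential $e^{C_2\ltn\omega\rtn^{p'}}$ without disturbing the dependence of $C_1,C_2$ on $T$ alone.
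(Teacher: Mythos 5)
Your proposal is correct and takes essentially the same route as the paper: the paper's own proof applies \eqref{Fx} to $x=Fx$ and then simply invokes ``arguments similar to that of the proof of Lemma \ref{lemma6}'', which is exactly the greedy-time Gronwall mechanism you execute explicitly --- greedy times with $\lambda=\alpha$ and $\mu$ as in \eqref{constmu}, the per-interval halving and doubling induction, assembly via Lemma \ref{additive}, and the counting bound \eqref{Nab2} with the admissible exponent $p'=\max\{p,\frac{1}{\alpha}\}$. Your write-up merely fills in the details the paper leaves implicit, and your identification of the $(t-s)^\alpha$ drift term as the reason $p$ is promoted to $p'$ matches the paper's reasoning.
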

\begin{proof}
Since $x$ is a solution, $x=Fx$, hence
by \eqref{Fx} we have
\begin{eqnarray*}
\ltn x\rtn_{q\text{-var},[s,t]}&\leq& M(K+2)\left(1+\|x\|_{q\text{-var},[s,t]}\right) \left((t-s)^{\alpha}+\ltn\omega\rtn_{p\text{-var},[s,t]}
\right)\\
&\leq & M(K+2)\left((t-s)^{\alpha}+\ltn\omega\rtn_{p\text{-var},[s,t]}
\right)\\
&& + M(K+2) (|x_s|+\ltn x\rtn_{q\text{-var},[s,t]})\left((t-s)^{\alpha}+\ltn\omega\rtn_{p\text{-var},[s,t]}
\right)
\end{eqnarray*}
Use the arguments similar to that of the proof of Lemma \ref{lemma6}
we conclude that there exist $C_1=C_1(T)$ and $C_2=C_2(T)$ such that
$$\ltn x\rtn_{q\text{-var},[s,t]}\leq C_1 (|x_s|+|t-s|^\alpha+\ltn \omega\rtn_{p\text{-var},[s,t]})e^{C_2\ltn\omega\rtn^{p'}_{p\text{-var},[s,t]}},\quad \forall s<t\in [t_0,T].
$$
Thus, we get \eqref{EstSolution0}.
\end{proof}
\medskip

In order to study the flow generated by the solution of system \eqref{integral.eq.Stieltes} in the next section, we need also to consider the backward version of \eqref{integral.eq.Stieltes} in the following form
  \begin{equation}\label{RDEbw}
 x_t = x_T + \int_t^T f(s,x_s) ds +  \int_t^T  g(s,x_s) d\omega_s, \quad t\in [0,T],
 \end{equation}
 where $x_T\in\R^d$ is the initial value of the backward equation \eqref{RDEbw}, the coefficient functions $f: [0,T] \times \R^d \rightarrow \R^d$,  
 $g: [0,T] \times \R^d \rightarrow \R^d\times \R^m$ are continuous functions, and the driven force $\omega: [0,T] \rightarrow \R^m$ belongs to $\widehat{C}^{p}([0,T],\R^m)$.
 
  \begin{theorem}[Existence and uniqueness of solutions of backward equation]\label{theo3}
   Consider the backward equation
  \eqref{RDEbw} on $[0,T]$. 
   Assume that the conditions ${\textbf H}_1-{\textbf H}_3$ hold.
  Then the backward equation \eqref{RDEbw} has a unique solution $x\in \widehat{C}^{q}([0,T],\R^d)$,  where $q$ is chosen as above satisfying \eqref{q}. 
  \end{theorem}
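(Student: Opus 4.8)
The plan is to reduce the backward equation \eqref{RDEbw} to the forward case already settled in Theorem~\ref{theo2} by means of a time-reversal transformation, so that no part of the hard analysis (the Schauder fixed point argument, the greedy-time decomposition, the Gronwall-type estimate) needs to be repeated. Concretely, I would introduce the reflected path $\tilde{x}_t := x_{T-t}$ for $t\in[0,T]$, together with the reflected driver $\tilde{\omega}_t := \omega_{T-t}$ and reflected coefficients $\tilde{f}(t,\xi) := f(T-t,\xi)$, $\tilde{g}(t,\xi) := g(T-t,\xi)$. The first thing to check is that the change of variables carries the backward integral identity into a forward one: using the additivity of the Young and Riemann integrals and the direction-reversal rule \eqref{Y-integral-direction}, namely $\int_b^a = -\int_a^b$, a substitution $s\mapsto T-s$ in \eqref{RDEbw} should yield
\begin{equation*}
\tilde{x}_t = \tilde{x}_0 + \int_0^t \tilde{f}(s,\tilde{x}_s)\, ds + \int_0^t \tilde{g}(s,\tilde{x}_s)\, d\tilde{\omega}_s, \quad t\in[0,T],
\end{equation*}
with initial value $\tilde{x}_0 = x_T$. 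Thus $x$ solves the backward equation on $[0,T]$ if and only if $\tilde{x}$ solves a forward equation of exactly the form \eqref{integral.eq.Stieltes} with starting time $t_0=0$.

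Next I would verify that the reflected data satisfy the same structural hypotheses $\textbf{H}_1$--$\textbf{H}_3$ as the original data, so that Theorem~\ref{theo2} applies verbatim to the forward problem for $\tilde{x}$. This is where one must be slightly careful rather than merely routine: the Lipschitz and local H\"older conditions in $\textbf{H}_1(i),(ii)$ and $\textbf{H}_2$ are pointwise in time and hence manifestly invariant under $t\mapsto T-t$ with the same constants $L_g, M_N, L_N, a$. The function $b$ reappears as $\tilde{b}(t) = b(T-t)$, whose $L_{1/(1-\alpha)}$ norm is unchanged. The point needing a short argument is the time-regularity condition $\textbf{H}_1(iii)$: I must exhibit a control $\tilde{h}$ on $\Delta[0,T]$ with $\tilde{h}(s,t)^\beta$ dominating the increments of $\tilde{g}$ in time; the natural choice is $\tilde{h}(s,t) := h(T-t, T-s)$, and I would check that this is indeed a control (zero on the diagonal and superadditive) because reflection reverses the order of the three points $s\le t\le u$ into $T-u\le T-t\le T-s$ and superadditivity of $h$ is preserved. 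One also checks that $\tilde{\omega}\in\widehat{C}^p([0,T],\R^m)$ with $\ltn\tilde{\omega}\rtn_{p\text{-var},[s,t]} = \ltn\omega\rtn_{p\text{-var},[T-t,T-s]}$, since reflection is a bijection on partitions and leaves the $p$-variation sums invariant.

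With both equivalences in hand, the conclusion is immediate: Theorem~\ref{theo2} furnishes a unique $\tilde{x}\in\widehat{C}^q([0,T],\R^d)$ solving the forward equation with $\tilde{x}_0=x_T$, and undoing the reflection gives the unique $x\in\widehat{C}^q([0,T],\R^d)$ solving \eqref{RDEbw}, because the map $x\mapsto\tilde{x}$ is a norm-preserving bijection of $\widehat{C}^q([0,T],\R^d)$ onto itself that sends solutions to solutions in both directions. I expect the main obstacle to be purely bookkeeping rather than conceptual: getting the signs and the endpoints of integration right through the substitution in \eqref{RDEbw}, in particular confirming that the boundary term emerging from $\int_t^T$ becomes the correct $\tilde{x}_0$ and that the reversal rule \eqref{Y-integral-direction} is applied on each subinterval consistently with the additivity of the Young integral. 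If one preferred to avoid the substitution entirely, an alternative is to rerun the proof of Theorem~\ref{theo2} directly on the backward equation, defining a backward analogue of the operator $F$ and backward greedy times; but this duplicates all the estimates of Proposition~\ref{EstIJ} and Lemma~\ref{lemma6}, so the reflection argument is decidedly the more economical route.
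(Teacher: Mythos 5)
Your proposal is correct and follows essentially the same route as the paper's own proof, which likewise reduces \eqref{RDEbw} to a forward equation via the time reversal $y_u := x_{T-u}$, $\hat{f}(u,x):=f(T-u,x)$, $\hat{g}(u,x):=g(T-u,x)$, $\hat{\omega}(u):=\omega(T-u)$, uses \eqref{Y-integral-direction} to transform the Young integral, verifies ${\textbf H}_1$--${\textbf H}_3$ for the reflected data (in particular $\hat{b}(v)=b(T-v)\in L_{\frac{1}{1-\alpha}}$), and then invokes Theorem~\ref{theo2}. Your explicit check that $\tilde{h}(s,t):=h(T-t,T-s)$ is again a control and that reflection preserves the $p$-variation norm is in fact slightly more careful than the paper, which simply asserts that ${\textbf H}_1$ obviously holds for $\hat{g}$.
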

\begin{proof}
We make a change of variables
$$
{\hat f}(u,x) := f(T-u,x), \quad {\hat g}(u,x) := g(T-u,x),\quad 
{\hat \omega}(u) := \omega(T-u), \quad y_u := x_{T-u},\quad u\in [0,T].
$$
Then $x_T = y_0$, and by putting $v=T-t$ and $u=T-s$ we have 
$$
\int_t^T f(s,x_s) ds = \int_t^T f(T-u,x_{T-u}) ds = -\int_{T-t}^0 {\hat f}(u,y_u) du = \int_0^v {\hat f}(u,y_u) du.
$$
Furthermore,  by virtue of the property \eqref{Y-integral-direction}
 of the Young integral  we have
$$
\int_t^T g(s,x_s) d\omega_s
= \int_t^T  g(T-u,x_{T-u}) d\omega_{T-u} 
= \int_0^v {\hat g}(u,y_u) d{\hat \omega}_u.
$$
Therefore, the backward equation   \eqref{RDEbw} is equivalent to the forward equation
\begin{equation}\label{integral.eq.Stieltes.foward1}
 y_v = y_0 + \int_0^v {\hat f}(u,y_u) du +  \int_0^v {\hat g}(u,y_u) d{\hat \omega}_u, \quad v\in [0,T],
 \end{equation}
  where $y_0=x_T\in\R^d$. 
    Now, we verify the conditions of Theorem \ref{theo2} for the forward equation \eqref{integral.eq.Stieltes.foward1}. First note that if $\omega\in \widehat{C}^{p}([0,T],\R^m)$   then ${\hat \omega}\in \widehat{C}^{p}([0,T],\R^m)$. Furthermore, 
 the condition (${\textbf H}_1$) obviously holds for $\hat g$ and the condition (i) of (${\textbf H}_2$)  holds for $\hat f$. For the condition (ii) of (${\textbf H}_2$) we note that if it holds for $f$ then
  $$
  |{\hat f}(v,x)| = |f(T-v,x)| \leq a|x| + b(T-v) = a|x| + {\hat b}(v), \quad v\in [0,T],
  $$
  where 
${\hat b}(v) = b(T-v) \in L_\frac{1}{1- \alpha}([0,T],\R^d)$ because (${\textbf H}_2$)(ii) is satisfied for $f$. Thus,
(${\textbf H}_2$)(ii) is satisfied for $\hat f$. Consequently,  Theorem \ref{theo2} is applicable to the forward equation \eqref{integral.eq.Stieltes.foward1}  implying that \eqref{integral.eq.Stieltes.foward1}  has unique solution
$y\in \widehat{C}^{q}([0,T],\R^d)$. 
Since \eqref{integral.eq.Stieltes.foward1} is equivalent to 
the backward equation   \eqref{RDEbw} we have the theorem proved.
\end{proof}
\begin{theorem}\label{contsolution}
Suppose that the assumptions of  Theorem \ref{theo2} are satisfied. Denote by $X(t_0,\cdot,\omega,x_0)$ the solution of \eqref{integral.eq.Stieltes} starting from $x_0$ at time $t_0$, i.e. $X(t_0,t_0,\omega,x_0)=x_0$. Then the solution mapping 
\begin{eqnarray*}
X: [0,T]\times[0,T]\times \widehat{C}^{p}([0,T],\R^m)\times \R^d &\rightarrow& \R^d,\\
(s,t,\omega,z) &\mapsto& X(s,t,\omega,z),
\end{eqnarray*}
 is continuous.
\end{theorem}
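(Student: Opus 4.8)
The plan is to establish continuity of the solution map $X$ by decomposing the total variation of $X(s,t,\omega,z)$ with respect to all four arguments and controlling each piece using the quantitative estimates already available, principally the growth estimate \eqref{EstSolution0} of Proposition~\ref{growth} and the difference estimate \eqref{Fxy} of Proposition~\ref{EstIJ}(ii) fed into the Gronwall-type Corollary~\ref{coll6}. The key observation is that continuity in each variable separately, combined with uniform local bounds, will yield joint continuity; but it is cleaner to estimate directly the quantity $|X(s,t,\omega,z) - X(s',t',\omega',z')|$ and show it tends to zero as $(s',t',\omega',z') \to (s,t,\omega,z)$. Because the solution is characterized as the fixed point $x = F(x)$ on $\widehat{C}^q$, and because Proposition~\ref{growth} gives \emph{a priori} bounds on $\ltn x\rtn_{q\text{-var}}$ depending continuously (and locally boundedly) on $|z|$ and on $\ltn\omega\rtn_{p\text{-var},[0,T]}$, all the trajectories stay in a common ball of $\widehat{C}^q([0,T],\R^d)$ as the data range over a small neighborhood. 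This uniform bound is what lets us invoke the difference estimates with a fixed constant $N$.

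First I would fix the target point $(s,t,\omega,z)$ and introduce a nearby point $(s',t',\omega',z')$. Using the triangle inequality I would split the difference into: (a) the dependence on the driver and the initial value $z$ with the initial time held fixed, and (b) the dependence on the evaluation time $t$ and the starting time $s$. For piece (a), let $x$ solve the equation with data $(\omega,z)$ started at $s$, and let $y$ solve it with data $(\omega',z')$ started at the same $s$; writing the integral equations and subtracting, I would bound $\ltn x - y\rtn_{q\text{-var},[s,T]}$ via \eqref{Fxy}, absorbing the linear-in-$(x-y)$ terms through Corollary~\ref{coll6}, so that the bound is driven by the ``inhomogeneous'' terms $|z - z'|$ and $\ltn\omega - \omega'\rtn_{p\text{-var}}$. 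Here I would need the Young--Loeve estimate \eqref{YL0} to control the difference of the two Young integrals $\int g(\cdot,x)\,d\omega$ and $\int g(\cdot,y)\,d\omega'$, splitting it as $\int (g(\cdot,x) - g(\cdot,y))\,d\omega + \int g(\cdot,y)\,d(\omega - \omega')$ and invoking Lemma~\ref{lemma2} together with \eqref{gx} for the second term. For piece (b), continuity in $t$ is immediate from $|X(s,t',\omega,z) - X(s,t,\omega,z)| \le \ltn x\rtn_{q\text{-var},[t\wedge t', t\vee t']}$, which is small by continuity of the $q$-variation on the simplex; continuity in the starting time $s$ follows by comparing the solution started at $s$ with the one started at $s'$ on the overlap, again through the same difference estimate applied on $[\,s\vee s', T\,]$ plus a short-interval bound on $[\,s\wedge s', s\vee s'\,]$.

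The main obstacle I expect is the uniform control of constants in the difference estimate: inequality \eqref{Fxy} carries the factor $M'_N(K+1)(2 + \ltn x\rtn^\delta + \ltn y\rtn^\delta)$, so before I can close the Gronwall argument I must know that both competing solutions lie in a fixed ball, with $N$ and the $q$-variation bounds independent of the particular nearby data. This is exactly what Proposition~\ref{growth} delivers, provided I first show that $\ltn\omega'\rtn_{p\text{-var},[0,T]}$ stays close to $\ltn\omega\rtn_{p\text{-var},[0,T]}$ (hence bounded) as $\omega' \to \omega$ in $\widehat{C}^p$, which is just lower semicontinuity plus the assumed convergence. A secondary technical point is that the greedy-time decomposition underlying Corollary~\ref{coll6} produces a constant $C$ that is \emph{independent of $T$} but depends on $c = \max\{a_1, a_2(K+1)\}$, and here the effective $a_1, a_2$ come from $M'_N$; so I must verify that the number of greedy subintervals $N(s,t,\omega')$ stays uniformly bounded, which follows from \eqref{Nab2} once $\ltn\omega'\rtn_{p\text{-var}}$ is bounded.

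Once these uniform bounds are in place, the estimate takes the schematic form
\begin{equation*}
\ltn x - y\rtn_{q\text{-var},[s,T]} \le \big(|z-z'| + C_3\,\ltn\omega - \omega'\rtn_{p\text{-var},[0,T]}\big)\, e^{C_4},
\end{equation*}
with $C_3, C_4$ depending only on $T$, the structural constants, and the common ball radius. Together with the two short-interval bounds for the time variables, and using $|X(s,t,\omega,z) - X(s',t',\omega',z')| \le \|x-y\|_{\infty,[0,T]} + (\text{time increments})$, this shows the right-hand side vanishes as $(s',t',\omega',z') \to (s,t,\omega,z)$, establishing joint continuity of $X$ and completing the proof.
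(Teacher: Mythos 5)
Your proposal follows essentially the same route as the paper's proof: uniform a priori bounds from Proposition~\ref{growth} so that all nearby solutions lie in a common ball (fixing the constant $N$ in \eqref{Fxy}), the same splitting $\int (g(\cdot,x)-g(\cdot,y))\,d\omega + \int g(\cdot,y)\,d(\omega-\omega')$ closed by Corollary~\ref{coll6}, and the same triangle-inequality decomposition in which continuity in the evaluation time comes from smallness of $\ltn x\rtn_{q\text{-var}}$ on short intervals and continuity in the starting time is reduced, via comparison on the overlap, to continuity in the initial value. The only detail you leave implicit---the backward extension of solutions via Theorem~\ref{theo3} so that $X(s,t,\omega,z)$ is defined for $t<s$---is noted at the outset of the paper's proof and is routine given your framework.
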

\begin{proof}
 First observe that, fixing $(\omega,x_0) \in \widehat{C}^{p}([0,T],\R^m)\times \R^d $ and looking at forward and backward equations \eqref{integral.eq.Stieltes} and \eqref{RDEbw}, we can extend the solution $X(t_0,\cdot,\omega,x_0)$ of \eqref{integral.eq.Stieltes}, with the initial value $x_0$ at $t_0$ to the whole $[0,T]$.
 The proof is divided into several steps. \medskip

 {\bf {Step 1 (Continuity w.r.t $x_0$)}:}\medskip
 
 By Proposition~\ref{growth}, we can choose $N_0$ (depending on $x_0$, $\omega$) such that
 $$
 \|X(t_0,\cdot,\omega',x'_0)\|_{q\text{-var},[0,T]}\leq N_0
 $$
for all $t_0\in [0,T]$, $|x_0-x'_0|\leq 1, \|\omega-\omega'\|_{p\text{-var},[0,T]}\leq 1$.
We use here, for short, notation
 $y_. = X(t_0,\cdot,\omega',x_0)$, $y'_. = X(t_0,\cdot,\omega',x'_0)$.
Using arguments similar to that of the proof of Proposition \ref{EstIJ}(ii), we have
\begin{eqnarray*}
|(y-y')_t-(y-y')_s|&\leq & \int_s^t|f(u,y_u)-f(u,y'_u)|du+ \left|\int_s^t g(u,y_u)-g(u,y'_u)d\omega'_u\right|\\
				&\leq & M_{N_0}^\prime (t-s)\|y-y'\|_{\infty,[s,t]} \\
				&&+  M_{N_0}^\prime(K+1)\ltn \omega'\rtn_{p\text{-var},[s,t]}\left( |y_s-y'_s| + \ltn y-y'\rtn_{q\text{-var},[s,t]}\right)(2+2N_0^\delta)\\
				&\leq &  M_{N_0}^\prime(K+1)(2+2N_0^\delta) ( |y_s-y'_s| + \ltn y-y'\rtn_{q\text{-var},[s,t]}) \left( t-s+ \ltn \omega'\rtn_{p\text{-var},[s,t]}\right).
\end{eqnarray*} 
Due to Corollary \ref{coll6}, there exist constants $C_3,C_4$ depending on parameters of the equation \eqref{integral.eq.Stieltes} and $N_0$, such that
$$
\ltn y-y'\rtn_{q\text{-var},[0,T]}\leq |y_0-y'_0| C_3 e^{C_4\ltn \omega'\rtn_{p\text{-var},[0,T]}^p}\leq |y_0-y'_0| C_3 e^{C_4(1+\| \omega\|_{p\text{-var},[0,T]})^p}.
$$
Therefore, 
\begin{eqnarray*}
|x_t-y_t| &\leq &|x_{t_0}-y_{t_0}| +\ltn x-y\rtn_{q\text{-var},[t_0,t]}\\
&\leq & |x_0-x'_0| \left(C_3 e^{C_4(1+\| \omega\|_{p\text{-var},[0,T]})^p} +1\right).
\end{eqnarray*} 
Consequently, we find a positive constants $C_1(T,\omega,x_0)$ such that  for all $t_0,t\in [0,T]$, all $\omega'$ such that $\|\omega'-\omega\|_{p\text{-var},[0,T]}<1$, we have
\begin{eqnarray}\label{cont1}
|X(t_0,t,\omega',x_0')-X(t_0,t,\omega',x_0)|\leq C_1(T,\omega,x_0)|x_0-x'_0|.
\end{eqnarray}

{\bf Step 2 (Continuity w.r.t. $\omega$):} \medskip

Let $\omega'\in \widehat{C}^{p}([0,T],\R^m) $ be such that $\|\omega'-\omega\|_{p\text{-var},[0,T]}\leq 1$.  We use here, for short, notation  $x_.=X(t_0,\cdot,\omega, x_0)$, $x'_.=X(t_0,\cdot,\omega', x_0)$. For all $s<t$ in $[0,T]$, we have
\begin{eqnarray*}
x_t-x_s=\int_s^tf(u,x_u)du+\int_s^t g(u,x_u)d\omega_u,\\
x^\prime_t-x^\prime_s=\int_s^tf(u,x'_u)du+\int_s^t g(u,x'_u)d\omega'_u.
\end{eqnarray*}
This implies
\begin{eqnarray*}
|(x'-x)_t-(x'-x)_s|&=&\left|\int_s^t[f(u,x'_u)-f(u,x_u)]du+\int_s^t [g(u,x'_u)-g(u,x_u)]d\omega_u\right .\\
&&\left .+\int_s^t g(u,x'_u)d(\omega'-\omega)_u\right|\\
&\leq &L_{N_0}(t-s)\|x'-x\|_{\infty,[s,t]} +M(K+1)(1+\|x'\|_{q\text{-var},[s,t]})\ltn\omega'-\omega\rtn_{p\text{-var},[s,t]}\\
&& +\ltn\omega'\rtn_{p\text{-var},[s,t]}M_{N_0}'(K+1) \left(|(x'-x)_s|+\ltn x'-x\rtn_{q\text{-var},[s,t]}\right)\\
&&\times\left(2+\ltn x'\rtn^\delta_{q\text{-var},[0,T]}+\ltn x\rtn^\delta_{q\text{-var},[0,T]}\right)\\
&\leq& C_5\ltn\omega'-\omega\rtn_{p\text{-var},[s,t]} \\
&&+ C_6\left(t-s+\ltn\omega'\rtn_{p\text{-var},[s,t]}\right)\left( |(x'-x)_s|+\ltn x'-x\rtn_{q\text{-var},[s,t]}\right)\\
&\leq& C_5\ltn\omega'-\omega\rtn_{p\text{-var},[s,t]} \\
&&+  C_6\left(t-s+\ltn\omega'\rtn_{p\text{-var},[s,t]}\right)\left( |(x'-x)_s|+\ltn x'-x\rtn_{q\text{-var},[s,t]}\right),
\end{eqnarray*}
where $C_5,C_6$ depend on $N_0$. Consequently, by virtue of 
  Lemma \ref{controled} we get 
\begin{eqnarray*}
\ltn x'-x\rtn_{q\text{-var},[s,t]}&\leq& C_3\ltn\omega'-\omega\rtn_{p\text{-var},[s,t]} \\
&&+ C_4\left(t-s+\ltn\omega\rtn_{p\text{-var},[s,t]}\right)\left( |(x'-x)_s|+\ltn x'-x\rtn_{q\text{-var},[s,t]}\right).
\end{eqnarray*}
Now, since $x'_{t_0}-x_{t_0}=0$, using Collorary \ref{coll6}  on $[t_0,t]$ (or $[t,t_0]$ and use backward equation if $t<t_0$) we find positive constant $ C_2(T,\omega,x_0)$ such that 
$$
\ltn x'-x\rtn_{q\text{-var},[t_0,t]}\leq C_2(T,\omega,x_0)\ltn\omega'-\omega\rtn_{p\text{-var},[t_0,t]} 
\leq C_2(T,\omega,x_0)\|\omega'-\omega\|_{p\text{-var},[0,T]}.
$$
Therefore, for all $t_0,t\in [0,T]$,
\begin{eqnarray}\label{cont2}
| X(t_0,t,\omega',x_0)-X(t_0,t,\omega,x_0)|\leq C_2(T,\omega,x_0)\|\omega'-\omega\|_{p\text{-var},[0,T]}.
\end{eqnarray}

{\bf Step 3 (Continuity in all variables):} \medskip

Now we fix $(t_1,t_2,\omega,x_0)$ and let $(t_1^\prime,t_2^\prime,\omega^\prime,x_0^\prime)$ be in a neighborhood of $(t_1,t_2,\omega,x_0)$  such that 
$$
|t_1-t_1^\prime|, |t_2-t_2^\prime|, \|\omega-\omega^\prime\|_{p\text{-var},[0,T]}, |x_0-x_0^\prime|\leq 1.
$$
By triangle inequality and \eqref{cont1}, \eqref{cont2}, we have
\begin{eqnarray*}
|X(t_1^\prime, t_2^\prime,\omega^\prime,x_0^\prime) - X(t_1,t_2,\omega,x_0)|&\leq & |X(t_1^\prime,t_2^\prime,\omega^\prime,x_0^\prime) - X(t_1^\prime,t_2^\prime,\omega',x_0)|+ |X(t_1^\prime,t_2^\prime,\omega^\prime,x_0) - X(t_1^\prime,t_2^\prime,\omega,x_0)|\\
&&+|X(t_1^\prime,t_2^\prime,\omega,x_0) - X(t_1,t_2',\omega,x_0)|+ |X(t_1,t_2',\omega,x_0)-X(t_1,t_2,\omega,x_0)|\\
&\leq & (C_1(T,\omega,x_0) + C_2(T,\omega,x_0))(|x'_0-x_0|+\|\omega'-\omega\|_{p\text{-var},[0,T]})\\
&&\hspace*{-2cm}+ |X(t_1',t_2',\omega,x_0)- X(t_1^\prime,t_2^\prime,\omega,X(t_1,t_1',\omega,x_0))| + \ltn X(t_1,\cdot,\omega,x_0)\rtn_{q\text{-var},[t_2,t_2']}
\end{eqnarray*} 
It is obvious that when the triple $(|x'_0-x_0|,\|\omega'-\omega\|_{p\text{-var},[0,T]}, |t_2'-t_2|)$ tends to 0 we have 
  $(C_1(T,\omega,x_0) + C_2(T,\omega,x_0))(|x-x_0|+\|\omega'-\omega\|_{p\text{-var},[0,T]})\to 0$ and $\ltn X(t_1,.,\omega,x_0)\rtn_{q\text{-var},[t_2,t_2']}\to 0$. 
As for the remaining term, let $|t_1'-t_1|$ be small enough so that $|X(t_1,t_1',\omega,x_0)-x_0|\leq 1$, using \eqref{cont1} again we obtain
\begin{eqnarray*}
 |X(t_1',t_2',\omega,X(t_1,t_1',\omega,x_0))- X(t_1^\prime,t_2^\prime,\omega,x_0)|&\leq & C_1(T,\omega,x_0) |X(t_1,t_1',\omega,x_0))-x_0|\\
&\leq & C_1(T,\omega,x_0)\ltn X(t_1,\cdot, \omega,x_0)\rtn_{q\text{-var},[t_1,t_1']},
\end{eqnarray*}
hence $ |X(t_1',t_2',\omega,X(t_1,t_1',\omega,x_0))- X(t_1^\prime,t_2^\prime,\omega,x_0)| \to 0$ as $|t_1'-t_1|\to 0$. Summing up the above arguments, we conclude that $X$ is continuous.
\end{proof}
\begin{remark}
 The time interval in Theorem~\ref{theo2} to Theorem \ref{contsolution} needs not be $[0,T]$. It can be $[t_0,t_0+T]$ for any $t_0\in\R$, $T>0$.
\end{remark}
\section{Topological flow generated by Young differential equations}
In this section we show that Young differential equations have many properties of ordinary differential equations. Especially, their solutions generate a two-parameter flow on the phase space $\R^d$, thus we can study the long term behavior of the solution flow using the tools of the theory of dynamical systems. Moreover, by defining appropriate dynamics in the space of  functions $\omega$ in $\widetilde{C}^{p}(\R,\R^m)$, we can study the long term behavior of the flow also in term of dynamics of $\omega$. For simplity of the presentation, we will assume from now on that all hypotheses  ${\textbf H}_1-{\textbf H}_3$ hold for all $T>0$ where all the parameters are independent of $T$.  
 
\subsection{Topological two-parameter flows for nonautonomous systems}

 \begin{theorem}[Different trajectories do not intersect]\label{flow1}
Assume that the conditions ${\textbf H}_1-{\textbf H}_3$ hold.
 Let $x_t$ and $\hat{x}_t$ be two solutions of the Young differential equation \eqref{integral.eq.Stieltes} on $[0,T]$. If  $x_a = \hat{x}_a$ for some $a\in [0,T]$ then $x_t =\hat{x}_t$ for all $t\in [0,T]$. In other words, two solutions of the differential equation \eqref{integral.eq.Stieltes} either coincide or do not intersect.
 \end{theorem}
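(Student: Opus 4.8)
The plan is to reduce the global statement to two one-sided uniqueness results by splitting the interval at the common point $a$, and to treat the halves $[a,T]$ and $[0,a]$ by the forward uniqueness of Theorem \ref{theo2} and by a time-reversed version of it (equivalently Theorem \ref{theo3}), respectively. The preparatory observation is that, by the additivity of the Riemann and Young integrals already used in Step 2 of the proof of Theorem \ref{theo2}, the restriction of either solution to any subinterval of $[0,T]$ is again a solution of \eqref{integral.eq.Stieltes} there. In particular both $x$ and $\hat x$ restrict to solutions on $[a,T]$ and on $[0,a]$; on the first interval their prescribed data at the left endpoint $a$ agree, while on the second their data agree only at the \emph{right} endpoint $a$.

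For the forward half, on $[a,T]$ both $x$ and $\hat x$ solve
$$
u_t = x_a + \int_a^t f(s,u_s)\,ds + \int_a^t g(s,u_s)\,d\omega_s, \qquad t\in[a,T],
$$
with the same initial datum $x_a=\hat x_a$ at the left endpoint. Since $x,\hat x\in\widehat C^{q}([0,T],\R^d)$, their restrictions lie in $\widehat C^{q}([a,T],\R^d)$, so the uniqueness assertion of Theorem \ref{theo2}, applied with $t_0=a$, yields $x_t=\hat x_t$ for all $t\in[a,T]$. Concretely this is the uniqueness step of that proof: setting $z=x-\hat x$ and using $z_a=0$ together with Proposition \ref{EstIJ}(ii) and Corollary \ref{coll6} forces $\ltn z\rtn_{q\text{-var},[a,T]}=0$.

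The delicate half is $[0,a]$, because there the two solutions agree at the right endpoint, whereas both Proposition \ref{EstIJ}(ii) and Corollary \ref{coll6} only control the $q$-variation in terms of the value at the left endpoint. To convert this into a left-endpoint problem I would reverse time exactly as in the proof of Theorem \ref{theo3}: set $y_u:=x_{a-u}$, $\hat y_u:=\hat x_{a-u}$, $\hat\omega_u:=\omega_{a-u}$ for $u\in[0,a]$, and use the direction property \eqref{Y-integral-direction} of the Young integral to rewrite the restricted equations as a single forward equation
$$
y_v = y_0 + \int_0^v \tilde f(u,y_u)\,du + \int_0^v \tilde g(u,y_u)\,d\hat\omega_u, \qquad v\in[0,a],
$$
with $\tilde f(u,\cdot):=-f(a-u,\cdot)$, $\tilde g(u,\cdot):=g(a-u,\cdot)$ and $y_0=x_a=\hat x_a=\hat y_0$. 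One checks, just as for $\hat f,\hat g$ in Theorem \ref{theo3}, that $\tilde f,\tilde g$ together with the reversed control $\tilde h(s,t):=h(a-t,a-s)$ still satisfy ${\textbf H}_1$--${\textbf H}_3$, and that $\hat\omega\in\widehat C^{p}([0,a],\R^m)$ since time reversal preserves $p$-variation. Hence Theorem \ref{theo2} applies to this reversed equation and gives $y_v=\hat y_v$ for all $v\in[0,a]$, that is $x_t=\hat x_t$ for all $t\in[0,a]$.

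Combining the two halves gives $x_t=\hat x_t$ on all of $[0,T]$, which is the assertion; the degenerate cases $a=0$ and $a=T$ need no splitting and follow directly from one of the two uniqueness statements. I expect the only real obstacle to be the backward half: one must keep careful track of the signs produced by \eqref{Y-integral-direction} when reversing time and verify that the reversed coefficients inherit ${\textbf H}_1$--${\textbf H}_3$. This is precisely the bookkeeping already carried out in the proof of Theorem \ref{theo3}, which is why passing to the reversed forward equation (equivalently, invoking the backward uniqueness theorem) is the natural device rather than attempting a direct ``backward Gronwall'' estimate.
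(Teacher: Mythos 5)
Your proposal is correct and takes essentially the same route as the paper: split at the meeting time $a$, settle $[a,T]$ by the forward uniqueness of Theorem \ref{theo2} with $t_0=a$, and settle $[0,a]$ by observing that both restrictions solve one and the same backward-type problem with datum $x_a$ at the right endpoint, whose uniqueness is exactly the content of Theorem \ref{theo3}. The only cosmetic difference is that the paper cites Theorem \ref{theo3} directly on the backward equation $x_t = x_a - \int_t^a f(s,x_s)\,ds - \int_t^a g(s,x_s)\,d\omega_s$ (the sign changes being absorbed into the coefficients, which still satisfy ${\textbf H}_1$--${\textbf H}_3$), whereas you unfold its time-reversal substitution explicitly — with the signs $\tilde f(u,\cdot)=-f(a-u,\cdot)$, $\tilde g(u,\cdot)=g(a-u,\cdot)$ correctly tracked via \eqref{Y-integral-direction} — and then invoke Theorem \ref{theo2} on the reversed forward equation.
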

 \begin{proof}
Suppose that $x_a=\hat{x}_a$ for some $a\in [0,T]$. If $a=0$ then by the uniqueness of the solution provided by Theorem~\ref{theo2}, $x_t =\hat{x}_t$ for all $t\in [0,T]$. Let $a\in (0,T]$.
Since the restrictions of the functions $x_t$ and $\hat{x}_t$ on $[a,T]$ are solutions of the equation
$$
 x_t = x_a + \int_0^t f(s,x_s) ds +  \int_0^t g(s,x_s) d\omega_s, \quad t\in [a,T],
$$
with the initial value $x_a=\hat{x}_a$,  Theorem~\ref{theo2} implies that  $x_t=\hat{x}_t$ for all $t\in [a,T]$.

Now, consider the restrictions of the functions $x_t$ and $\hat{x}_t$ on $[0,a]$. They are solutions of the equations
 $$
 x_t = x_0 + \int_0^t f(s,x_s) ds + \ \int_0^t g(s,x_s) d\omega_s, \quad t\in [0,a],
$$
with the initial values $x_0$ and ${\hat x}_0$ respectively. Since  $x_a=\hat{x}_a$ we have
\begin{eqnarray*}
x_0 + \int_0^a f(s,x_s) ds + \ \int_0^a g(s,x_s) d\omega_s &=& x_a \; =\;  {\hat x}_a\\
&=& {\hat x}_0 + \int_0^a f(s,{\hat x}_s) ds +  \int_0^a g(s,{\hat x}_s) d\omega_s.
\end{eqnarray*}
Hence,
\begin{eqnarray*}
x_0 &=& x_a - \int_0^a f(s,x_s) ds -  \int_0^a g(s,x_s) d\omega_s,\\
{\hat x}_0 &=& x_a- \int_0^a f(s,{\hat x}_s) ds -  \int_0^a g(s,{\hat x}_s) d\omega_s.
\end{eqnarray*}
 Therefore, on $[0,a]$ the two functions $x_t$ and $\hat{x}_t$ are solutions of the same backward equation
 \begin{equation}\label{eqn.nonintersection.bw}
  x_t = x_a - \int_t^a f(s,x_s) ds -  \int_t^a g(s,x_s) d\omega_s, \quad t\in [0,a],
\end{equation}
 with the same initial value $x_a$. Clearly, Theorem \ref{theo3} is applicable and  provides uniqueness of solution of the backward equation \eqref{eqn.nonintersection.bw} on $[0,a]$, hence  $x_t$ must coincide with $\hat{x}_t$  on $[0,a]$ and the theorem is proved.
 \end{proof}
 
 \begin{remark}[Locality of Young differential equations]
 By virtue of Theorems \ref{theo2}, \ref{theo3} and \ref{flow1}, under the assumptions of Theorem \ref{theo2}, the equation \eqref{integral.eq.Stieltes}  has locality properties like ODE: we can solve it locally and extend the solution both forward and backward, and any two solutions meeting each other at some time should coincide in the common interval of definitions.
 \end{remark}

 Now, in analog with the theory of ordinary differential equation we give a definition of the Cauchy operator of the equation \eqref{integral.eq.Stieltes}, which is an operator in $\R^d$ acting along trajectoties of \eqref{integral.eq.Stieltes}. 
  
  \begin{definition}[Cauchy operator]\label{dfn.Cauchy}
Suppose that the conditions ${\textbf H}_1-{\textbf H}_3$ hold. 
 For any $-\infty< t_1\leq t_2<+\infty$, any $\omega\in \widetilde{C}^p(\R,\R^m)$ the {\em Cauchy operator} $X(t_1,t_2,\omega,\cdot)$ of the equation \eqref{integral.eq.Stieltes} is defined as follows:
 $$
 X(t_1,t_2,\omega,\cdot) : \R^d \rightarrow \R^d
 $$ 
 is the mapping along trajectories of \eqref{integral.eq.Stieltes}  from time moment $t_1$ to time moment $t_2$, i.e., for any vector $x_{t_1}\in \R^d$ we define $X(t_1,t_2,\omega,x_{t_1})$ to be the vector $x_{t_2}\in\R^d$ which is the value of the solution $x$ of the equation
  $$
   x_t = x_{t_1} + \int_{t_1}^t f(s,x_s) ds +  \int_{t_1}^t g(s,x_s) d\omega_s, \quad t\in [t_1,t_2],  
$$
 evaluated at time $t_2$.
 \end{definition}
 
 \begin{theorem}\label{cauchy.operator}
Assume that the conditions ${\textbf H}_1-{\textbf H}_3$ hold. 
 For any $-\infty< t_1\leq t_2<+\infty$ the Cauchy operator $X(t_1,t_2,\omega,\cdot)$ of  \eqref{integral.eq.Stieltes}  is a homeomorphism. Moreover, $X(t_1,t_2,\omega,\cdot)=id$.
 \end{theorem}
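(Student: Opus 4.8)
The plan is to verify the three defining properties of a homeomorphism—continuity, bijectivity, and continuity of the inverse—and then to read off the identity property directly from the definition of the Cauchy operator. Throughout I work with a fixed driver $\omega$ and fixed times $t_1\le t_2$.

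Continuity of $x_{t_1}\mapsto X(t_1,t_2,\omega,x_{t_1})$ requires no new work. Theorem~\ref{contsolution} already establishes joint continuity of the solution map $(s,t,\omega,z)\mapsto X(s,t,\omega,z)$ in all of its arguments; restricting to fixed $t_1,t_2,\omega$ gives continuity of $X(t_1,t_2,\omega,\cdot)$ on $\R^d$. The same theorem, read along the backward direction (equivalently, applied to the backward equation \eqref{RDEbw} through Theorem~\ref{theo3}), also yields continuity of $X(t_2,t_1,\omega,\cdot)$, the candidate inverse.

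The key step is to identify $X(t_2,t_1,\omega,\cdot)$ as a two-sided inverse of $X(t_1,t_2,\omega,\cdot)$. Fix $x\in\R^d$ and let $\phi$ be the forward solution on $[t_1,t_2]$ with $\phi(t_1)=x$, so that $X(t_1,t_2,\omega,x)=\phi(t_2)$. Using additivity of the Riemann and Young integrals, the forward integral equation rearranges to
$$
\phi(t)=\phi(t_2)-\int_t^{t_2} f(s,\phi(s))\,ds-\int_t^{t_2} g(s,\phi(s))\,d\omega_s,\qquad t\in[t_1,t_2],
$$
which is exactly the backward equation \eqref{RDEbw} with terminal value $\phi(t_2)$. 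By the uniqueness part of Theorem~\ref{theo3}, $\phi$ coincides with the backward solution issued from $\phi(t_2)$ at time $t_2$; evaluating at $t_1$ gives $X(t_2,t_1,\omega,\phi(t_2))=\phi(t_1)=x$, so $X(t_2,t_1,\omega,\cdot)\circ X(t_1,t_2,\omega,\cdot)=\mathrm{id}$. The symmetric computation—starting from a backward solution and observing that it solves the forward equation \eqref{integral.eq.Stieltes}, then invoking Theorem~\ref{theo2}—gives $X(t_1,t_2,\omega,\cdot)\circ X(t_2,t_1,\omega,\cdot)=\mathrm{id}$; alternatively this follows from Theorem~\ref{flow1}, since two trajectories agreeing at one time must agree everywhere.

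Combining these facts, $X(t_1,t_2,\omega,\cdot)$ is a continuous bijection whose inverse $X(t_2,t_1,\omega,\cdot)$ is also continuous, hence a homeomorphism. Finally, the identity property $X(t_1,t_1,\omega,\cdot)=\mathrm{id}$ is immediate from Definition~\ref{dfn.Cauchy}: the solution evaluated at its own initial time returns the prescribed initial value. I expect the main obstacle to be the middle step—checking rigorously that a forward solution, viewed unchanged on the same interval, satisfies the backward equation—but this is precisely the algebraic manipulation already performed in the proof of Theorem~\ref{flow1}, so it reduces to a clean application of the backward uniqueness theorem.
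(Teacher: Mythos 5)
Your proposal is correct and follows essentially the same route as the paper: both reduce invertibility to the forward--backward equivalence (the algebraic rearrangement from the proof of Theorem~\ref{flow1}), invoke Theorem~\ref{theo3} for the backward equation, and obtain continuity of the map and of its inverse from Theorem~\ref{contsolution}. The only cosmetic difference is that you exhibit $X(t_2,t_1,\omega,\cdot)$ as an explicit two-sided inverse, whereas the paper argues injectivity via Theorem~\ref{flow1} and surjectivity via backward existence --- the same content in a different order (and, like the paper's proof, you correctly read the misprinted claim $X(t_1,t_2,\omega,\cdot)=id$ as $X(t_1,t_1,\omega,\cdot)=id$).
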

 \begin{proof}
 By Theorem~\ref{flow1} the Cauchy operator $X(t_1,t_2,\omega,\cdot)$ is an injection. Using arguments of the proof of Theorem~\ref{flow1} we get that the equation
 \begin{equation}\label{eqn.fw}
   x_t = x_{t_1} + \int_{t_1}^t f(s,x_s) ds +  \int_{t_1}^t g(s,x_s) d\omega_s, \quad t\in [t_1,t_2],   
\end{equation}
 with the terminal value $x_{t_2}\in\R^d$ and unknown initial value $x_{t_1}$,
 is equivalent to the following initial value problem for the backward equation on $[t_1,t_2]$
  \begin{equation}\label{eqn.bw}
   x_t = x_{t_2} - \int_t^{t_2} f(s,x_s) ds -  \int_t^{t_2} g(s,x_s) d\omega_s, \quad t\in [t_1,t_2],
\end{equation}
with initial value $x_{t_2}\in\R^d$,
  hence Theorem \ref{theo3} is applicable and provides existence of solution for any terminal value $x_{t_2}$ of the forward equation on $[t_1,t_2]$. Consequently, the Cauchy operator $X(t_1,t_2,\omega,\cdot)$ is a surjection, thus a bijection.
  
 It is clear from the proof of Theorem \ref{theo2} and Theorem \ref{contsolution} that the solutions of \eqref{integral.eq.Stieltes} depend continuously on the initial values. Therefore, the  Cauchy operator $X(t_1,t_2,\omega,\cdot)$ acts continuously on $\R^d$.  Similar conclusion holds for the inverse $X^{-1}(t_1,t_2,\omega,\cdot)$ by using backward equation. Hence $X(t_1,t_2,\omega,\cdot)$  is a homeomorphism and trivially $X(t_1,t_1,\omega,\cdot)=id$.
 \end{proof}
 \medskip
 Following \cite[page 114]{kunita}, below we introduce the concept of two parameter flows.
 \begin{definition}[Two-parameter flow]\label{dfn.2flow}
 \rm
 A family of mappings $X_{s,t} : \R^d \rightarrow \R^d$ depending on two real \text{var}iables $s,t\in [a,b] \subset \R$ is call a {\em two-parameter flow of homeomorphisms of $\R^d$ on $[a,b]$} if it satisfies the following conditions:\smallskip\\
 (i) For any $s,t\in [a,b]$ the mapping $X_{s,t}$ is a homeomorphism of $\R^d$;\smallskip\\
 (ii) $X_{s,s} = id$ for any $s\in [a,b]$;\smallskip\\
 (iii) $X_{s,t}^{-1} = X_{t,s}$ for any $s,t\in [a,b]$;\smallskip\\
 (iv) $X_{s,t} = X_{u,t}\circ X_{s,u}$ for any $s,t,u\in [a,b]$.
 \end{definition}

 \begin{theorem}[Two-parameter flow generated by Young differential equations]\label{thm.flow}
Assume that the conditions ${\textbf H}_1-{\textbf H}_3$ hold. 
 The family of Cauchy operators of \eqref{integral.eq.Stieltes} generates a two parameter flow of homeomorphisms of $\R^d$. Namely, for $-\infty< t_1\leq t_2<+\infty$ and $\omega\in \widetilde{C}^p(\R,\R^m)$  we define $X(t_1,t_2,\omega,\cdot)$ according to Definition \ref{dfn.Cauchy} and setting $X(t_2,t_1,\omega,\cdot) := X^{-1}(t_1,t_2,\omega,\cdot)$, then the family
 $X(t_1,t_2,\omega,\cdot)$, $t_1,t_2\in [0,T]$, is a two parameter flow of homeomorphisms of $\R^d$ on $[0,T]$. Furthermore, the flow is continuous. 
 \end{theorem}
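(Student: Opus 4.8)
The plan is to verify that the family $X(t_1,t_2,\omega,\cdot)$, $t_1,t_2\in[0,T]$, satisfies the four defining properties (i)--(iv) of Definition \ref{dfn.2flow} and then to deduce joint continuity from Theorem \ref{contsolution}. Properties (i)--(iii) are essentially already in hand. Indeed, by Theorem \ref{cauchy.operator} the forward Cauchy operator $X(t_1,t_2,\omega,\cdot)$ with $t_1\le t_2$ is a homeomorphism of $\R^d$, and its inverse---which is precisely $X(t_2,t_1,\omega,\cdot)$ by our definition---is therefore also a homeomorphism; this gives (i) for every ordered pair. Property (ii), $X_{s,s}=id$, is the last assertion of Theorem \ref{cauchy.operator}. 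Property (iii), $X_{s,t}^{-1}=X_{t,s}$, holds by the very definition $X(t_2,t_1,\omega,\cdot):=X^{-1}(t_1,t_2,\omega,\cdot)$ for $t_1\le t_2$, and symmetrically for $t_1>t_2$.

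The core of the argument is the flow (cocycle) property (iv), $X_{s,t}=X_{u,t}\circ X_{s,u}$. I would first establish it for an ordered triple $s\le u\le t$. Fix $x_s\in\R^d$ and let $x_\cdot$ be the unique solution on $[s,t]$ of \eqref{integral.eq.Stieltes} with $x(s)=x_s$, so that $X(s,t,\omega,x_s)=x_t$ and $X(s,u,\omega,x_s)=x_u$. By the additivity of the Riemann and Young integrals---exactly the concatenation argument carried out in Step 2 of the proof of Theorem \ref{theo2}---the restriction of $x_\cdot$ to $[u,t]$ is a solution of \eqref{integral.eq.Stieltes} on $[u,t]$ with initial value $x_u$; by the uniqueness in Theorem \ref{theo2} its value at $t$ is $X(u,t,\omega,x_u)$. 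Hence $X(u,t,\omega,X(s,u,\omega,x_s))=x_t=X(s,t,\omega,x_s)$, which is (iv) in the ordered case. The remaining orderings of $s,u,t$ then follow formally from the ordered case together with (iii): for instance, in the case $u\le s\le t$ the ordered cocycle gives $X_{u,t}=X_{s,t}\circ X_{u,s}$, whence $X_{s,t}=X_{u,t}\circ X_{u,s}^{-1}=X_{u,t}\circ X_{s,u}$, and the other orderings are handled analogously, using the backward equation of Theorem \ref{theo3} whenever an index falls outside an interval of solvability.

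Finally, continuity of the flow follows from Theorem \ref{contsolution}, which gives joint continuity of the solution map $(s,t,\omega,z)\mapsto X(s,t,\omega,z)$ on $[0,T]\times[0,T]\times\widehat{C}^{p}([0,T],\R^m)\times\R^d$; the preliminary observation in the proof of that theorem already extends $X(t_0,\cdot,\omega,x_0)$ to $t<t_0$ through the backward equation, so the statement covers the inverse operators $X(t_2,t_1,\omega,\cdot)$ as well, and continuity with respect to $\omega$ in the metric of $\widetilde{C}^{p}(\R,\R^m)$ is obtained from continuity on each restriction via Remark \ref{truncated}(i). I expect the main obstacle to be organisational rather than analytic: one must keep the bookkeeping of forward versus backward operators consistent across all six orderings of $s,u,t$ when deriving (iv), and invoke the concatenation of Step 2 in the correct direction. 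Once (iv) is verified on ordered triples, everything else reduces to the bijectivity, the identity at $s=t$, and the inverse relation already proved.
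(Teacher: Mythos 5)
Your proposal is correct and follows essentially the same route as the paper: properties (i)--(iii) from Theorem \ref{cauchy.operator} and the definition of the inverse, property (iv) from uniqueness and concatenation of solutions (the paper cites Theorem \ref{flow1}, whose proof is exactly the forward/backward uniqueness you invoke), and continuity from Theorem \ref{contsolution}. Your explicit reduction of (iv) from ordered triples $s\leq u\leq t$ to arbitrary orderings via $X_{s,t}^{-1}=X_{t,s}$ merely fills in bookkeeping the paper leaves implicit.
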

 \begin{proof}
 Conditions (i)-(ii) of Definition \ref{dfn.2flow} follow from Theorem \ref{cauchy.operator}.
 
 Condition (iii) of Definition \ref{dfn.2flow} follows from the definition $X(t_2,t_1,\omega,\cdot) := X^{-1}(t_1,t_2,\omega,\cdot)$ for $t_1\leq t_2$.
 Actually, it is seen from the proof of Theorem \ref{cauchy.operator} that the inverse $X(t_2,t_1,\omega,\cdot)$ satisfies the backward equation \eqref{eqn.bw}.
 
 Condition (iv) of Definition \ref{dfn.2flow} follows from 
  the definition of the Cauchy operators and 
 Theorem  \ref{flow1}. 
 
 The continuity of the flow follows directly from Theorem \ref{contsolution}.
 \end{proof}

\section*{Acknowledgments}

This research is funded by Vietnam National Foundation for Science and Technology Development (NAFOSTED) under grant number  101.03-2014.42.


\bibliography{literatur}
\bibliographystyle{abbrv}

\end{document}